\newtheorem{theorem}{Theorem}[section]
\newtheorem{lemma}[theorem]{Lemma}
\newtheorem{proposition}[theorem]{Proposition}
\newcommand{\vb}{\vspace{3mm}}
\newcommand{\DROU}{\mbox{\sc drou}}
\newcommand{\OU}{\mbox{\sc ou}}
\newcommand{\ROU}{\mbox{\sc rou}}
\newcommand{\SDE}{\mbox{\sc sde}}
\newcommand{\CLT}{\mbox{\sc clt}}
\newcommand{\DD}{{\rm d}}
\newcommand{\s}{^\star}
\newcommand{\rr}{\mathrm{R}}
\newcommand{\one}{\mathbf{1}}
\newcommand{\pp}{\mathbb{P}}
\newenvironment{proof}[1][\proofname]{\par \normalfont \trivlist
 \item[\hskip\labelsep\itshape #1]\ignorespaces
}{%
 \hspace*{\fill}$\Box$ \endtrivlist
}
\newcommand{\proofname}{{\bf Proof}}
\begin{document}
\title{{Limit theorems for reflected \\Ornstein-Uhlenbeck processes}}

\author{Gang Huang, Michel Mandjes \& Peter Spreij}
\maketitle
\begin{abstract} \noindent
This paper studies one-dimensional Ornstein-Uhlenbeck processes, with the distinguishing feature that they are reflected on a single boundary (put at level $0$) or two boundaries (put at levels $0$ and $d>0$).
In the literature they are referred to as reflected $\OU$ ($\ROU$) and doubly-reflected $\OU$ ($\DROU$) respectively. For both cases, we explicitly determine the decay rates of the (transient) probability to reach a given extreme level. The methodology relies on sample-path large deviations, so that we also identify the associated most likely paths.
For $\DROU$, we also consider the `idleness process' $L_t$ and the `loss process' $U_t$, which are the minimal nondecreasing processes
which make the $\OU$ process remain $\geqslant 0$ and $\leqslant d$, respectively. We derive central limit theorems ($\CLT$\,s) for $U_t$ and $L_t$, using techniques from stochastic integration and the martingale $\CLT$.

\vb

\noindent {\it Keywords.} Ornstein-Uhlenbeck processes  $\star$ reflection $\star$ large deviations $\star$ central-limit theorems

\vb

\noindent {\it Affiliations.}
The authors are with Korteweg-de Vries Institute for Mathematics, University of Amsterdam, Science Park 904, 1098 XH  Amsterdam,
the Netherlands; the second author is also with CWI, Amsterdam, the Netherlands, and Eurandom, Eindhoven University of Technology, the Netherlands.

\vb

\noindent {\it Email}. $\{$\tt{g.huang|m.r.h.mandjes|p.j.c.spreij}$\}$\tt{@uva.nl}.
\end{abstract}

\newpage
\section{Introduction}
Ornstein-Uhlenbeck ($\OU$) processes are Markovian, mean reverting Gaussian processes. They well describe various real-life phenomena, and allow a relatively high degree of analytical tractability. As a result, they have found wide-spread use  in a broad range of application domains, such as finance, life sciences, and operations research. In many situations, though, the stochastic process involved is not allow to cross a certain boundary, or is even supposed to remain within two boundaries. The resulting reflected (denoted in the sequel by $\ROU$)
and doubly-reflected ($\DROU$) $\OU$ processes have hardly been studied, though, a notable exception being the works by Ward and Glynn \cite{MR1957808,MR1993278,MR2172907}, where  $\ROU$ processes 
are used to approximate the number-in-system processes in M/M/1 and GI/GI/1 queues with reneging under a specific, reasonable scaling; the $\DROU$ process can be seen as an approximation of the associated finite-buffer queue. 
Srikant and Whitt \cite{Srikant:1996:SRL:229493.229496} also show that the number-in-system process in a GI/M/$n$ loss model can be approximated by $\ROU$. For other applications, we refer to e.g.\ the introduction of \cite{MR2944002} and references therein.

\vb

As known, the $\OU$ process is defined
as the unique strong solution to the stochastic differential equation ($\SDE$):
\[
\DD X_{t}=(\alpha-\gamma X_{t})\DD t+\sigma \DD B_{t} , \:\:\:\:X_0=x \in \mathrm{R},
\]
where $\alpha\in\rr$, $\gamma, \sigma >0$ and $B_{t}$ is a standard Brownian motion. This process is {\it mean-reverting} towards the value $\alpha/\gamma$. To incorporate reflection at a lower boundary $0$, thus constructing $\ROU$, the following $\SDE$ is used, where we throughout the paper additionally assume $\alpha>0$,
\[\DD Y_{t}=(\alpha-\gamma Y_{t})\DD t+\sigma \DD B_{t}+\DD L_{t}, \:\:\:\:Y_{0}=x\geqslant 0,
\]
where $L_{t}$ could be interpreted as an `idleness process'. More precisely, $L_t$ is defined as the minimal nondecreasing process such that $Y_{t}\geqslant 0$ for $t\geqslant 0$;  it holds that $\int_{[0, T]}\one_{\{Y_{t}>0\}}\DD L_{t}=0$ for any $T>0$. 

Likewise, reflection at two boundaries can be constructed. $\DROU$ is defined through
the $\SDE$
\[ \DD Z_{t}=(\alpha-\gamma Z_{t})\DD t+\sigma \DD B_{t}+\DD L_{t}-\DD U_{t}, \:\:\:\:Z_{0}=x \in [0,d],
\]
where $U_{t}$ is the `loss process' at the boundary $d$, i.e., we have
$\int_{[0, T]}\one_{\{Z_{t}>0\}}\DD L_{t}=0$ as well as
$\int_{[0, T]}\one_{\{Z_{t}< d\}}\DD U_{t}=0$ for any $T>0$. In the case of $\DROU$ we assume that the upper boundary $d$ is larger than ${\alpha}/{\gamma}$ throughout this paper, to guarantee that hitting $d$ does not happen too frequently (which is a reasonable assumption for most of applications). For the existence of a unique solution to the above $\SDE$s with reflecting boundaries, we refer to e.g.\ \cite{MR529332}. In the context of queues with finite-capacity, $U_{t}$ is the continuous analog to the cumulative amount of loss over $[0, t]$, and that explains why we refer to it as the `loss process'.  

\vb

A first objective of this paper is to obtain insight into transient rare-event probabilities. We do so
for an $\ROU$ process with `small perturbations', that is, a process given through the $\SDE$
\[
\DD Y^{\epsilon}_{t}=(\alpha-\gamma Y^{\epsilon}_{t})\DD t+\sqrt{\epsilon}\sigma \DD B_{t}+\DD L^{\epsilon}_{t},\]
with $\epsilon>0$ typically small. The transient distribution (at time $T\geqslant0$, for any initial value $x\geqslant0$) of the $\OU$ process being explicitly known (it actually has a Normal distribution), we lack such results for the $\ROU$ process. (As an aside, we note that the {\it stationary} distribution of $\ROU$ {\it is} known \cite{MR1993278}; it is a truncated Normal distribution.)
The above motivates the interest in large-deviations asymptotics of the type
\begin{equation}\label{DRA}
\lim_{\epsilon\rightarrow 0}\epsilon \log\mathbb{P}(Y^{\epsilon}_{T}\geqslant b\mid Y^{\epsilon}_{0}=x),
\end{equation}
for $x\geqslant0$, $T\geqslant0$, and $b>{\mathbb E} (Y^{\epsilon}_{T}\mid Y^{\epsilon}_{0}=x)$ (so that the event under consideration is rare). We follow the method used for computing blocking probabilities of the Erlang queue in \cite{MR1335456}, that is, relying on sample-path large deviations. In our strategy, a first step is to study the above decay rate for the `normal' (that is, non-reflected) $\OU$ process. This decay rate is computed as the solution of a certain variational problem, relying on standard calculus-of-variations: it minimizes an `action functional' over all paths $f$ such that $f(0)=x$ and $f(T)\geqslant b$. The optimizing path $f\s$ has the informal interpretation of `most likely path' (or: `minimal cost path'): given the rare event under study happens, with overwhelming probability it does so through a path `close to' $f\s$.
The next step is to observe that $f\s$ does not hit level $0$ between $0$ and $T$, and hence the decay rate that we found also applies for $\ROU$ (rather than $\OU$).

The computations for $\OU$ are presented in Section \ref{SEC2}. The results are in line with what could be computed from the explicitly known distribution of $X^{\epsilon}_{T}$ conditional on $X^{\epsilon}_0=x$, but provide us in addition with the most likely path. 
Section \ref{SEC3} then focuses on the computation of the decay rate for $\ROU$. Above we described the intuitively appealing approach we followed, but it should be emphasized that at the technical level there are some non-trivial steps to be taken. 
The primary complication is that the local large-deviations rate function at the reflecting boundary is
different from this function in the interior \cite{MR770425}. Inspired by \cite{MR0397893}, 
we derive explicit expressions of the large-deviations rate function for $\ROU$ by properties of the reflection map in the deterministic Skorokhod problem. 
Unfortunately, calculus-of-variation techniques cannot be used immediately to identify 
the most likely path; this is due to the fact that we need to minimize over all non-negative continuous paths. However, the non-negativity of the optimizing path for the $\OU$ process
facilitates the computation of the decay rates for $\ROU$. In Section 4, we compute the decay rate for $\DROU$ by the same strategy as the one for $\ROU$.

\vb

The second part of the paper focuses on $\DROU$, with emphasis on properties of the loss process $U_t$ (and also the idleness process $L_t$), for $t$ large. Zhang and Glynn's martingale approach, as developed in \cite{MR2771195}, is employed to tackle a problem of this type. With $h(\cdot)$ being a twice continuously differentiable real function, we apply It\^{o}'s formula on $h(Z_{t})$ and require $h(\cdot)$ to satisfy certain ordinary differential equations ({\sc ode}\,s) and specific initial and boundary conditions in order to construct martingales related to $U_{t}$ and $L_{t}$. The presence of $Z_{t}$ in the drift term leads to {\sc ode}\,s with nonconstant coefficients, which seriously complicates the derivation of exact solutions. In Section \ref{SEC5} we use this approach to identify a $\CLT$ for $U_t$: we find explicit expressions for $q_U$ and $\eta_U$ such that $(U_t-q_U t)/\sqrt{t}$ converges to a Normal random variable with mean $0$ and variance $\eta_U^2$; a similar result is established for $L_t$. In Section \ref{SEC6}, we discuss 
the corresponding large deviations probability, that is the probability that $U_{t}/t$ exceeds a given threshold $c$ larger than $Q_U$, for $t$ large.

\section{Transient asymptotics for Ornstein-Uhlenbeck}\label{SEC2}
The primary goal of this section is to compute the decay rate (\ref{DRA}) with $Y^\epsilon$ replaced by $X^\epsilon$; in other words, we now consider the $\OU$ case (that is, no reflection). Before we attack this problem, we first identify the $\OU$ process' average behavior. To this end, we first describe the so-called `zeroth-order approximation' of one-dimensional diffusion processes. The $\SDE$ (more general than the one defining $\OU$) we here consider is
\[
\DD J^{\epsilon}_{t}=b(J^{\epsilon}_{t})\DD t+\sqrt{\epsilon} \sigma(J^{\epsilon}_{t})\DD B_{t}, \:\:\:\:J^{\epsilon}_{0}=x,
\]
and the corresponding {\sc ode} is 
\begin{equation*}\label{eq:ouode}
\DD x(t)=b(x(t))\DD t,\,\,x(0)=x.
\end{equation*}

\begin{theorem} {\rm \cite[Thm.\ 2.1.2]{MR722136}}
Suppose that $b(\cdot)$ and $\sigma(\cdot)$ are Lipschitz continuous and increase no faster than linearly, i.e.,
\begin{equation*}
[b(x)-b(y)]^{2}+[\sigma(x)-\sigma(y)]^{2}\leqslant K^{2}|x-y|^{2},
\end{equation*}
\begin{equation*}
b^{2}(x)+\sigma^{2}(x)\leqslant K^{2}(1+|x|^{2}),
\end{equation*}
where $K$ is a constant. Then for all $t>0$ and $\epsilon>0$ we have
\begin{equation*}
{\mathbb E}|J^{\epsilon}_{t}-x(t)|^{2}\leqslant \epsilon a(t),
\end{equation*}
where $a(t)$ is a monotone increasing function, which is expressed in terms of $|x|$ and $K$.
Moreover, for all $t>0$ and $\delta>0$
\begin{equation*}
\lim_{\epsilon \rightarrow 0}{\mathbb P}\left(\sup_{0\leqslant s\leqslant t}|J^{\epsilon}_{s}-x(s)|>\delta\right)=0.
\end{equation*}
\end{theorem}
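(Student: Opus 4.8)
The plan is to run the classical Gronwall-type estimate for stochastic differential equations. Write both $J^{\epsilon}$ and the deterministic curve $x(\cdot)$ in integrated form,
\[
J^{\epsilon}_{t}-x(t)=\int_{0}^{t}\bigl(b(J^{\epsilon}_{s})-b(x(s))\bigr)\DD s+\sqrt{\epsilon}\int_{0}^{t}\sigma(J^{\epsilon}_{s})\,\DD B_{s},
\]
noting that the linear growth of $b$ guarantees that $x(\cdot)$ exists and is bounded on $[0,t]$. Square the identity, using $(p+q)^{2}\leqslant 2p^{2}+2q^{2}$, and take expectations. Bound the drift contribution by the Cauchy--Schwarz inequality in time together with the Lipschitz hypothesis, $\bigl(b(J^{\epsilon}_{s})-b(x(s))\bigr)^{2}\leqslant K^{2}|J^{\epsilon}_{s}-x(s)|^{2}$, and the stochastic contribution by the It\^o isometry together with the linear-growth hypothesis, $\sigma^{2}(J^{\epsilon}_{s})\leqslant K^{2}(1+|J^{\epsilon}_{s}|^{2})$. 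Writing $u(t):={\mathbb E}|J^{\epsilon}_{t}-x(t)|^{2}$, this yields an inequality of the shape $u(t)\leqslant C_{1}(t)\int_{0}^{t}u(s)\,\DD s+\epsilon\,C_{2}(t)$, with $C_{1},C_{2}$ increasing and (for, say, $\epsilon\leqslant 1$) independent of $\epsilon$.

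Before Gronwall's lemma can be applied one must know that $\epsilon\,C_{2}(t)$ is finite, which needs the a priori second-moment bound $\sup_{0\leqslant s\leqslant t}{\mathbb E}|J^{\epsilon}_{s}|^{2}\leqslant C(|x|,K,t)<\infty$. This is obtained by the same recipe applied directly to $J^{\epsilon}_{t}=x+\int_{0}^{t}b(J^{\epsilon}_{s})\DD s+\sqrt{\epsilon}\int_{0}^{t}\sigma(J^{\epsilon}_{s})\DD B_{s}$: square, take expectations, use linear growth and the It\^o isometry, and invoke Gronwall. With this in hand, Gronwall's inequality applied to $u$ gives $u(t)\leqslant\epsilon\,C_{2}(t)\exp\!\bigl(\int_{0}^{t}C_{1}(s)\DD s\bigr)=:\epsilon\,a(t)$, and $a(t)$, being built from integrals and an exponential of increasing functions of $|x|$ and $K$, is monotone increasing; this is the first assertion.

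For the uniform (in $s\leqslant t$) statement, return to the integrated identity and estimate, via $(p+q)^{2}\leqslant 2p^{2}+2q^{2}$ once more,
\[
{\mathbb E}\sup_{0\leqslant s\leqslant t}|J^{\epsilon}_{s}-x(s)|^{2}\leqslant 2tK^{2}\int_{0}^{t}u(s)\,\DD s+2\epsilon\,{\mathbb E}\sup_{0\leqslant s\leqslant t}\Bigl|\int_{0}^{s}\sigma(J^{\epsilon}_{r})\DD B_{r}\Bigr|^{2},
\]
where the last term is controlled by Doob's $L^{2}$ maximal inequality (or the Burkholder--Davis--Gundy inequality) together with the moment bound just established. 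Using the first assertion to bound $\int_{0}^{t}u(s)\DD s\leqslant\epsilon\int_{0}^{t}a(s)\DD s$, the whole right-hand side is $\leqslant\epsilon\,\tilde a(t)$ for a finite increasing $\tilde a(t)$, and Chebyshev's inequality gives ${\mathbb P}\bigl(\sup_{0\leqslant s\leqslant t}|J^{\epsilon}_{s}-x(s)|>\delta\bigr)\leqslant\epsilon\,\tilde a(t)/\delta^{2}\to 0$ as $\epsilon\to 0$.

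The main obstacle here is bookkeeping rather than conceptual: one must first establish the a priori bound $\sup_{s\leqslant t}{\mathbb E}|J^{\epsilon}_{s}|^{2}<\infty$ (a separate Gronwall argument) and then track all constants carefully, so that they remain independent of $\epsilon$ and are manifestly monotone in $t$, giving $a(t)$ and $\tilde a(t)$ the claimed structure. The Lipschitz and linear-growth assumptions are exactly what makes each of these estimates close.
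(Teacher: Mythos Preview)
The paper does not actually prove this theorem: it is quoted verbatim from Freidlin--Wentzell \cite[Thm.~2.1.2]{MR722136} and used as a black box, so there is no ``paper's own proof'' to compare against. Your argument is the standard one (and essentially the argument in the cited reference): integrate, square, use Lipschitz/linear growth together with the It\^o isometry to get a Gronwall inequality for $u(t)={\mathbb E}|J^\epsilon_t-x(t)|^2$, having first secured the a~priori moment bound $\sup_{s\leqslant t}{\mathbb E}|J^\epsilon_s|^2<\infty$ by the same mechanism; then upgrade to the supremum via Doob's maximal inequality and conclude with Chebyshev. This is correct and complete as a sketch; the only care points you already flagged (keeping constants $\epsilon$-free for $\epsilon\leqslant 1$, and establishing the moment bound before invoking Gronwall) are exactly the ones that matter.
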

In the specific case of $\OU$ processes, the corresponding small perturbation process $X^{\epsilon}_{t}$ (on a finite time interval) satisfies
\begin{equation}\label{eq:oueps}
\DD X^{\epsilon}_{t}=(\alpha-\gamma X^{\epsilon}_{t})\DD t+\sqrt{\epsilon}\sigma \DD B_{t},
 \:\:\:\:X_0=x \geqslant 0.
\end{equation}
 It is readily checked that the limiting process $x(t)$ is given by 
\[
\dot{x}(t)= \alpha-\gamma x(t),\,\, x(0)=x, 
\]
which has the solution
 \[x(t) = \frac{\alpha}{\gamma}+\left(x-\frac{\alpha}{\gamma}\right)e^{-\gamma t}.\] 
Note that $x(t)=\mathbb{E}X^\epsilon_t$. Popularly,
as $\epsilon\downarrow 0$, with high probability $X^{\epsilon}_{t}$ is contained  in any $\delta$-neighborhood of $x(t)$ on the interval $[0, T]$. Assuming that $b> x(T)$, it is now seen that the probability of our interest, of which we wish to identify the decay rate, relates to a rare event.

\vb

We now recall the Freidlin-Wentzell theorem \cite[Thm.\ 5.6.7]{MR1619036}, which is the cornerstone behind the results of this section.
To this end, we first define $C_{[0,T]}({\mathbb R})$ as the space of continuous functions from $[0,T]$ to ${\mathbb R}$, with the uniform norm $\|f\|_{\infty}:=\sup_{t\in[0, T]} |f(t)|$ and the metric $d(f,g):=\|f-g\|_{\infty}$. The Freidlin-Wentzell result now states that $X^\epsilon$ satisfies the sample-path large deviations principle ({\sc ldp}) with the good rate function
\[
I_{x}(f):=\begin{cases}
(2\sigma^{2})^{-1}\int_{0}^{T}(f'(t)-\alpha+\gamma f(t))^{2}\DD t & \text{if } f\in H_x,\\
\infty & \text{if } f \notin H_x,
\end{cases}\]
where $H_x:=\{f: f(t)=x+\int_{0}^{t}\phi(s)\DD s, \phi \in L_{2}([0,T])\}$. The {\sc ldp} states that
for any closed set $F$ and open set $G$ in $(C_{[0,T]}({\mathbb R}), \|\cdot\|_{\infty})$,
\begin{eqnarray*}
\limsup_{\epsilon \rightarrow 0} \epsilon \log \mathbb{P}(X^{\epsilon}_{t}\in F)&\leqslant& -\inf_{f \in F} I_{x}(f),\\
\liminf_{\epsilon \rightarrow 0} \epsilon \log \mathbb{P}(X^{\epsilon}_{t}\in G)&\geqslant& -\inf_{f \in G} I_{x}(f).
\end{eqnarray*}
These upper and lower bounds obviously match for $I_x$-continuity sets $S$, that is, sets $S$ such that $\inf_{f\in \mathrm{cl}\,S}I_{x}(f)=\inf_{f\in \mathrm{int}\,S}I_{x}(f)$.

\vb

We now return to the decay rate under consideration. Let us first introduce some notation, following standard conventions in Markov process theory. We write $\pp_x(E)$ for the probability of an event $E$ in terms of the process $X^\epsilon$ if this process starts in $x$. We will mainly work with a fixed time horizon $T>0$ and write $X_\centerdot$ for $\{X_t,\, t\in [0,T]\}$.
Our first step is to express the probability under study in terms of probabilities featuring in the sample-path {\sc ldp}. Observe that
we can write $\pp_x(X^\epsilon_T\geqslant b) = {\mathbb P}_x(X^\epsilon_\centerdot\in S),$
with
\[S:=\bigcup_{a\geqslant b} S_a, \:\:\:S_a
:=\left\{f\in C_{[0,T]}({\mathbb R}): f(0)=x, f(T)=a\right\}.\]
Below we first solve a calculus-of-variation problem to find $\inf_{f \in S_a} I_{x}(f)$ explicitly. Secondly, we prove that $S$ is an $I_{x}$-continuity set. A combination of these findings gives us an expression for the decay rate.
%\medskip\\
%From now on we assume $\alpha > 0$, as this will be a natural assumption for the $\ROU$ and $\DROU$ processes in later sections.

\begin{proposition}\label{PROP1} Let $a\geqslant b> x(T)$.
Then
\[
\inf_{f \in S_{a}}I_{x}(f)=
\frac{[a-x(T)]^{2}}{[1-e^{-2\gamma T}] (\sigma^{2}/\gamma)}=\frac{[a-(\frac{\alpha}{\gamma}+(x-\frac{\alpha}{\gamma})e^{-\gamma T})]^{2}}{[1-e^{-2\gamma T}] (\sigma^{2}/\gamma)}.
\]
The optimizing path is given by
\[
f\s(t)=(C-\frac{\alpha}{\gamma})e^{\gamma t}+(x-C)e^{-\gamma t} +\frac{\alpha}{\gamma},
\:\:\:\mbox{where}\:\:\:
C:=\frac{a-\frac{\alpha}{\gamma}+\frac{\alpha}{\gamma}e^{\gamma T}-x e^{-\gamma T}}{e^{\gamma T}-e^{-\gamma T}}.\]
Moreover, $f\s(t)\geqslant 0$ on $t \in [0, \infty)$ when the starting point $x\geqslant 0$; $f\s(t)\in [0, d]$ on $t \in [0, T]$ when the starting point $x\in [0, d]$, $a\in [0, d]$ and ${\alpha}/{\gamma}<d$.
\end{proposition}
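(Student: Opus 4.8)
The plan is to solve the calculus-of-variations problem by the standard Euler--Lagrange route, then verify the claimed sign/range properties of the minimizer directly from the explicit formula. Since the constraint set $S_a$ fixes both endpoints ($f(0)=x$, $f(T)=a$), the infimum of $I_x$ over $S_a$ is attained at a smooth extremal, so I would first write the Lagrangian $\mathcal{L}(f,f') = (2\sigma^2)^{-1}(f'-\alpha+\gamma f)^2$ and form the Euler--Lagrange equation $\frac{\DD}{\DD t}\mathcal{L}_{f'} = \mathcal{L}_f$. A short computation reduces this to the linear second-order ODE $f'' = \gamma^2 f - \alpha\gamma$ (equivalently $f'' - \gamma^2(f-\alpha/\gamma)=0$), whose general solution is $f(t) = A e^{\gamma t} + B e^{-\gamma t} + \alpha/\gamma$. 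Imposing $f(0)=x$ and $f(T)=a$ determines $A$ and $B$; matching to the stated form $f^\star(t) = (C-\alpha/\gamma)e^{\gamma t} + (x-C)e^{-\gamma t} + \alpha/\gamma$ identifies $A = C-\alpha/\gamma$, $B = x-C$, and solving the endpoint equations gives exactly the displayed value of $C$ (one checks $A+B+\alpha/\gamma = x$ automatically, and the equation at $t=T$ yields $C$ after dividing by $e^{\gamma T}-e^{-\gamma T}$).

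Next I would compute the value $I_x(f^\star)$ by substituting $f^\star$ into the action integral. The clean way is to note that along the extremal $f^{\star\prime}(t) - \alpha + \gamma f^\star(t) = 2\gamma(C-\alpha/\gamma)e^{\gamma t}$, so the integrand is $(2\sigma^2)^{-1}\cdot 4\gamma^2(C-\alpha/\gamma)^2 e^{2\gamma t}$; integrating over $[0,T]$ produces $(2\sigma^2)^{-1}\cdot 2\gamma(C-\alpha/\gamma)^2(e^{2\gamma T}-1)$. It then remains to simplify $(C-\alpha/\gamma)^2(e^{2\gamma T}-1)$ in terms of $a$ and $x(T)$; writing $C-\alpha/\gamma = \big(a - \alpha/\gamma - (x-\alpha/\gamma)e^{-\gamma T}\big)/(e^{\gamma T}-e^{-\gamma T}) = (a-x(T))e^{\gamma T}/(e^{2\gamma T}-1)$ and plugging in collapses everything to $[a-x(T)]^2\gamma/\big((1-e^{-2\gamma T})\sigma^2\big)$, as claimed. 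This part is routine algebra; the only care needed is tracking the $e^{\gamma T}$ factors.

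The last assertion — that $f^\star \geqslant 0$ on $[0,\infty)$ when $x\geqslant 0$, and $f^\star \in [0,d]$ on $[0,T]$ when $x,a\in[0,d]$ and $\alpha/\gamma<d$ — I would handle by analyzing the monotonicity of $f^\star$. Since $f^\star(t) = A e^{\gamma t} + B e^{-\gamma t} + \alpha/\gamma$ with $f^{\star\prime\prime} = \gamma^2(f^\star - \alpha/\gamma)$, the function $f^\star - \alpha/\gamma$ is convex where it is positive and concave where it is negative, which forces $f^\star$ to be unimodal: it has at most one interior critical point, and it is either monotone or first decreasing then increasing (or vice versa) on any interval — more precisely, $f^{\star\prime}$ has at most one zero. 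Hence on a closed interval the extreme values of $f^\star$ are attained either at the endpoints or at the single interior critical point. For the lower bound: if the minimum were at an interior point $t_0$, then $f^{\star\prime}(t_0)=0$ and $f^{\star\prime\prime}(t_0)\geqslant 0$ gives $f^\star(t_0)\geqslant \alpha/\gamma > 0$ (using $\alpha>0$); since $a\geqslant b>x(T)$ and $x(T)$ is a convex combination of $x\geqslant 0$ and $\alpha/\gamma>0$, the endpoint values are also $\geqslant 0$, so $f^\star\geqslant 0$ throughout. The upper bound $f^\star\leqslant d$ on $[0,T]$ is symmetric: an interior maximum at $t_1$ would satisfy $f^{\star\prime\prime}(t_1)\leqslant 0$, hence $f^\star(t_1)\leqslant \alpha/\gamma < d$, while the endpoints give $x\leqslant d$ and $a\leqslant d$. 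I expect the main obstacle to be phrasing the unimodality argument cleanly — one must rule out the degenerate case where $f^\star$ is constant or where the critical point lies outside the interval, but in both cases the conclusion follows even more easily from the endpoint values, so a careful case split (interior critical point inside $[0,T]$ versus not) closes it.
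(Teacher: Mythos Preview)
Your derivation of the Euler--Lagrange equation, the general solution, the identification of the constants, and the evaluation of $I_x(f^\star)$ are essentially identical to the paper's proof; nothing to add there.

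For the range properties you take a genuinely different route. The paper rewrites the extremal as
\[
f^\star(t) = x(t) + (a-x(T))\,q(t),\qquad q(t):=\frac{\sinh(\gamma t)}{\sinh(\gamma T)},
\]
and then as an explicit convex combination of $a$, $x$, and $\alpha/\gamma$, which immediately gives $f^\star(t)\geqslant x(t)\geqslant 0$ (since $a>x(T)$) and $f^\star(t)\leqslant d$ (since all three barycentric weights are nonnegative and the three vertices are $\leqslant d$). Your approach instead exploits the second-order relation $f^{\star\prime\prime}=\gamma^2(f^\star-\alpha/\gamma)$ to bound $f^\star$ at any interior critical point by $\alpha/\gamma$, reducing both estimates to checking endpoint values. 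Both arguments are short; the paper's convex-combination identity is perhaps more informative structurally, while your critical-point argument avoids the hyperbolic-sine manipulations and would transfer more readily to other drifts.

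One small gap to close: the nonnegativity claim is on the \emph{unbounded} interval $[0,\infty)$, not $[0,T]$, so your ``extreme values at endpoints or the single critical point'' reasoning needs the extra observation that the minimum of $f^\star$ on $[0,\infty)$ is actually attained. This follows because your own computation gives $A=C-\alpha/\gamma=(a-x(T))/(e^{\gamma T}-e^{-\gamma T})>0$, hence $f^\star(t)\to+\infty$ as $t\to\infty$; once that is said, your dichotomy (minimum at $t=0$ with value $x\geqslant 0$, or at an interior critical point with value $\geqslant\alpha/\gamma>0$) goes through cleanly. As written, your phrase ``the endpoint values are also $\geqslant 0$'' reads as if you are working on $[0,T]$, so make the $A>0$ step explicit.
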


\begin{proof}
Obviously,
\[
\inf_{f \in S_{a}} I_{x}(f)
=
 \inf \left\{\frac{1}{2\sigma^{2}}\int_{0}^{T}(f'(t)+\gamma f(t)-\alpha)^{2}\DD t, \:f\in H_x\cap S_{a}\right\}.\]
According to  Euler's necessary condition \cite[Thm.\ C.13]{MR1335456}, the initial condition and the boundary condition, we have that the optimizing path satisfies 
\begin{equation*}
f''(t)-\gamma^2 f(t)+\alpha\gamma=0, \:\:\:\:f(0)=x, \:\:\:\:f(T)=a.
\end{equation*}
The general solution of the {\sc ode} (unique up to the choice of the two constants) reads
\begin{equation*}
f(t)=C_{1} e^{\gamma t}+C_{2}e^{-\gamma t}+\frac{\alpha}{\gamma}.
\end{equation*}
It is now readily checked that the stated expression follows, by imposing the initial condition and the boundary condition.
Hence,
\begin{equation*}
\inf_{f \in S_{a}}I_{x}(f)
= \frac{(2C_1\gamma)^{2}}{2\sigma^{2}}\int_{0}^{T}e^{2\gamma t}\DD t =\frac{[a-(\frac{\alpha}{\gamma}+(x-\frac{\alpha}{\gamma})e^{-\gamma T})]^{2}}{[1-e^{-2\gamma T}] (\sigma^{2}/\gamma)}.
\end{equation*}
%The proof that $f\s(t)\geqslant 0$ and $f\s(t)\in [0,d]$ on $t \in [0, \infty)$ (when the starting point $x$ is nonnegative and in $[0,d]$ respectively) is given in the Appendix A.1.
We proceed with proving that $f\s(t)\geqslant 0$ and $f\s(t)\in [0,d]$ on $t \in [0, \infty)$ under the two stipulated assumptions. 
First we
note that $x(t)=xe^{-\gamma t}+(1-e^{-\gamma t})\,{\alpha}/{\gamma}$, a convex combination of $x$ and ${\alpha}/{\gamma}$. Since both of these are nonnegative by assumption, so is $x(t)$.
For $f\s(t)$ we have the following alternative expressions with $q(t):={\sinh(\gamma t)}/{\sinh (\gamma T)}$, as a direct computation shows:
\begin{align*}
f\s(t) & = x(t)+(a-x(T))q(t) \\
& = q(t)a+(e^{-\gamma t}-q(t)e^{-\gamma T})x+\big(1-e^{-\gamma t}-q(t)(1-e^{-\gamma T})\big)\frac{\alpha}{\gamma}.
\end{align*}
It follows from the first equality that $f\s(t)\geqslant x(t)$, because $a \geqslant x(T)$, and hence $f\s(t)$ is nonnegative. Moreover, the second equality shows that $f\s(t)$ is a convex combination of $a$, $x$ and ${\alpha}/{\gamma}$, see below. Since all three of these are assumed to be less than $d$, the same holds true for $f\s(t)$. 
%Note that $f^*(t)\leq d$ for all $t\in [0,T]$ implies $x=f^*(0)\leq d$ and $a=f^*(T)\leq d$. Hence the latter two conditions are necessary. Likewise, a sufficient condition for $f^*$ to be nonnegative is that $a$, $x$ and $\frac{\alpha}{\gamma}$ are nonnegative.
%
Finally we show that we indeed have the claimed convex combination, by showing that all coefficients are nonnegative and sum to one. The latter is obvious, as well as $q(t)\in [0,1]$. Furthermore $e^{-\gamma t}-q(t)e^{-\gamma T}\geqslant (1-q(t))e^{-\gamma T}\geqslant 0$. To prove that the third coefficient is nonnegative we use the basic equality
\[
\sinh (x)=\frac{(1+e^x)(1-e^{-x})}{2}.
\]
Then observe that 
\begin{align*}
1-e^{-\gamma t}-q(t)(1-e^{-\gamma T}) & = 1-e^{-\gamma t}-\frac{(1+e^{\gamma t})(1-e^{-\gamma t})}{(1+e^{\gamma T})(1-e^{-\gamma T})}(1-e^{-\gamma T}) \\
& = (1-e^{-\gamma t})\left(1-\frac{1+e^{\gamma t}}{1+e^{\gamma T}}\right)\geqslant 0.
\end{align*}
This completes the proof.
\end{proof}

\vb

\begin{proposition} \label{PROP2} $S$ is an $I_{x}$-continuity set.\end{proposition}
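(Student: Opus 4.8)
The plan is to compute $\inf_{f\in\mathrm{cl}\,S}I_x(f)$ and $\inf_{f\in\mathrm{int}\,S}I_x(f)$ and to verify that they coincide, the evaluation of each being an immediate corollary of Proposition \ref{PROP1}. A preliminary remark is needed about \emph{where} these topological operations are performed. Since $I_x(f)=+\infty$ unless $f\in H_x$, and $H_x$ sits inside the affine subspace $C_x:=\{g\in C_{[0,T]}({\mathbb R}):g(0)=x\}$, while $\pp_x(X^\epsilon_\centerdot\in C_x)=1$ for every $\epsilon>0$, the sample-path {\sc ldp} for $X^\epsilon_\centerdot$ may equivalently be read as an {\sc ldp} on the closed subspace $C_x$ equipped with the induced uniform topology. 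All closures and interiors below are taken in $C_x$.

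I would first identify $\mathrm{cl}\,S$ and $\mathrm{int}\,S$. Writing $S=\{f\in C_x:f(T)\geqslant b\}=\bigcup_{a\geqslant b}S_a$, a uniform limit of such $f$ again satisfies $f(0)=x$ and $f(T)\geqslant b$, so $S$ is closed and $\mathrm{cl}\,S=S$. For the interior: if $f\in C_x$ and $f(T)>b$, then any $g\in C_x$ with $\|g-f\|_\infty<f(T)-b$ has $g(T)>b$ and hence lies in $S$, so such $f$ is interior; if $f(T)=b$, then $g(t):=f(t)-\eta\,t/T$ lies in $C_x$, is within $\eta$ of $f$, and has $g(T)=b-\eta<b$, so $f$ is not interior. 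Hence $\mathrm{int}\,S=\{f\in C_x:f(T)>b\}=\bigcup_{a>b}S_a$.

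It then remains to compare the two values. By Proposition \ref{PROP1}, for each $a\geqslant b$ (recall the standing assumption $b>x(T)$) we have $\inf_{f\in S_a}I_x(f)=\psi(a)$ with $\psi(a):=[a-x(T)]^2\big/\big([1-e^{-2\gamma T}]\,\sigma^2/\gamma\big)$, and $\psi$ is continuous and strictly increasing on $[x(T),\infty)$. Therefore
\[
\inf_{f\in\mathrm{cl}\,S}I_x(f)=\inf_{a\geqslant b}\psi(a)=\psi(b)=\lim_{a\downarrow b}\psi(a)=\inf_{a>b}\psi(a)=\inf_{f\in\mathrm{int}\,S}I_x(f),
\]
so $S$ is an $I_x$-continuity set; feeding this into the Freidlin--Wentzell bounds yields $\lim_{\epsilon\to0}\epsilon\log\pp_x(X^\epsilon_T\geqslant b)=-\psi(b)$.

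The only subtle point — and the one worth spelling out carefully — is the preliminary remark: read verbatim in $C_{[0,T]}({\mathbb R})$ the statement is false, since there $S$ has empty interior (any neighbourhood of an $f$ with $f(0)=x$ contains functions with $f(0)\neq x$). One must therefore either pass to the subspace $C_x$ as above, or bypass the notion of interior altogether by running the {\sc ldp} lower bound on the genuinely open set $G:=\{f\in C_{[0,T]}({\mathbb R}):f(T)>b\}$ and the upper bound on the closed set $S$, using $\pp_x(X^\epsilon_\centerdot\in G)\leqslant\pp_x(X^\epsilon_T\geqslant b)\leqslant\pp_x(X^\epsilon_\centerdot\in S)$ together with $\inf_{G}I_x=\psi(b)=\inf_{S}I_x$, which follows by the same monotonicity-and-continuity argument. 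Once this is settled, the proof is routine.
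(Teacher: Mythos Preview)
Your proof is correct and follows essentially the same route as the paper: pass to the affine subspace $C_x=\{f:f(0)=x\}$ (the paper's $\bar S_x$), identify $\mathrm{cl}\,S=S$ and $\mathrm{int}\,S=\{f\in C_x:f(T)>b\}$ via continuity of the evaluation map $f\mapsto f(T)$, and then invoke Proposition~\ref{PROP1} together with monotonicity of $a\mapsto [a-x(T)]^2$ on $[x(T),\infty)$ to conclude that the two infima coincide. Your treatment is in fact more explicit than the paper's on the key topological subtlety---that $S$ has empty interior in the ambient space $C_{[0,T]}({\mathbb R})$, so one must either restrict to $C_x$ or sandwich with the genuinely open set $\{f:f(T)>b\}$---and your alternative bypass via separate open and closed sets is a clean way to avoid the issue altogether.
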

\begin{proof} Consider the topological space $(C_{[0,T]}({\mathbb R}), \tau)$, where the topology $\tau$ is induced by the metric $d(f,g)$.
We next consider $\bar S_{x}=\{f \in C_{[0,T]}({\mathbb R}):f(0)=x\}$ with the subspace topology $\tau_{\bar S_{x}}=\{U \cap \bar S_{x}: U \in \tau\}$. The set $S$ is a closed subset in $\bar S_{x}$ since 
%$\bar S_{x}\setminus S=\bar S_{x}\cap \{f\in C_{[0,T]}({\mathbb R}):f(T)<b\}$ and $\{f\in C_{[0,T]}({\mathbb R}):f(T)<b\} \in \tau$. So $\mathrm{cl}\, S$ equals $S$.
the coordinate mapping $f\mapsto f(T)$ is $\tau$-continuous. By the same property and the fact that the coordinate mapping is $\tau$-open, the $\tau_{\bar S_{x}}$-interior of $S$ is $\mathrm{int}\, S=\{f \in C_{[0,T]}({\mathbb R}):f(0)=x, f(T)> b\}$.
%
%We now define \begin{eqnarray*}
%M&:=&\{f \in C_{[0,T]}({\mathbb R}):f(0)=x, f(T)> b\},\\
%N&:=&\{f \in C_{[0,T]}({\mathbb R}):f(0)=x, f(T)=b\},
%\end{eqnarray*} so that $S$ is the union of these two disjoint sets. For any $f \in M$, we can choose an $\epsilon <f(T)-b$ such that the neighborhood
%\[N_{\epsilon}(f):=\left\{g \in C_{[0,T]}({\mathbb R}):\sup_{t\in[0, T]} |f(t)-g(t)|<\epsilon\right\}\cap \bar S_{x}\]
%is a subset of $S$. But for any $f \in N$, every neighborhood $N_{\epsilon}(f)$  of $f$ consists of some functions $g$ such that $g(T)<b$, so that any $f \in N$ is not an interior point of $S$. It now follows that $\mathrm{int}\, S=\{f \in C_{[0,T]}({\mathbb R}):f(0)=x, f(T)> b\}$.
We thus have
\begin{equation*}
\inf_{f \in \mathrm{cl} \,S}I_{x}(f)
= \inf_{f \in S}I_{x}(f)
= \inf_{a\geqslant b} \inf_{f \in S_{a}}I_{x}(f),
\:\:\:\mbox{and}\:\:\:
\inf_{f \in \mathrm{int}\, S}I_{x}(f)= \inf_{a> b} \inf_{f \in S_{a}}I_{x}(f).
\end{equation*}
Using Proposition~\ref{PROP1}
and the fact that $a\geqslant b>x(T)$,  \[
\inf_{a\geqslant b} \inf_{f \in S_{a}}I_{x}(f)=\inf_{a> b} \inf_{f \in S_{a}}I_{x}(f)=\frac{[b-(\frac{\alpha}{\gamma}+(x-\frac{\alpha}{\gamma})e^{-\gamma T})]^{2}}{[1-e^{-2\gamma T}] (\sigma^{2}/\gamma)}.\]Consequently, $S$ is an $I_{x}$-continuity set. \end{proof}

\vb

Now the decay rate under consideration can be determined.
\begin{proposition} \label{PROP3} Let $b>x(T)$. Then
\[
\lim_{\epsilon\rightarrow 0}\epsilon \log\mathbb{P}_x(X^{\epsilon}_{T}\geqslant b)=-\frac{[b-(\frac{\alpha}{\gamma}+(x-\frac{\alpha}{\gamma})e^{-\gamma T})]^{2}}{[1-e^{-2\gamma T}] (\sigma^{2}/\gamma)}.\]
Moreover, the minimal cost path is as given in Proposition~\ref{PROP1} (with $a$ replaced by $b$).
\end{proposition}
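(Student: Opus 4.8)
The plan is to feed Propositions~\ref{PROP1} and~\ref{PROP2} into the Freidlin--Wentzell sample-path {\sc ldp}. First I would record, as already observed, that $\pp_x(X^{\epsilon}_{T}\geqslant b)=\pp_x(X^{\epsilon}_{\centerdot}\in S)$ with $S=\bigcup_{a\geqslant b}S_{a}$. Since $X^{\epsilon}_{\centerdot}$ takes values in $C_{[0,T]}({\mathbb R})$ and $S$ is closed there (being closed in the closed subspace $\bar S_{x}$, as in the proof of Proposition~\ref{PROP2}), the {\sc ldp} upper bound applied to the closed set $S$ gives $\limsup_{\epsilon\downarrow0}\epsilon\log\pp_x(X^{\epsilon}_{\centerdot}\in S)\leqslant-\inf_{f\in S}I_{x}(f)$, while the lower bound applied to the open set $\mathrm{int}\,S$ gives $\liminf_{\epsilon\downarrow0}\epsilon\log\pp_x(X^{\epsilon}_{\centerdot}\in S)\geqslant-\inf_{f\in\mathrm{int}\,S}I_{x}(f)$.

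By Proposition~\ref{PROP2} the set $S$ is an $I_{x}$-continuity set, i.e.\ $\inf_{f\in S}I_{x}(f)=\inf_{f\in\mathrm{int}\,S}I_{x}(f)$, so the two bounds above coincide; the resulting squeeze shows that the limit in question exists and equals $-\inf_{f\in S}I_{x}(f)$. By Proposition~\ref{PROP1} this infimum equals $\inf_{a\geqslant b}[a-x(T)]^{2}\big/\big([1-e^{-2\gamma T}](\sigma^{2}/\gamma)\big)$, and since $b>x(T)$ the map $a\mapsto[a-x(T)]^{2}$ is strictly increasing on $[b,\infty)$; hence the infimum is attained (uniquely) at $a=b$, and this yields the displayed decay rate.

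For the most likely path, the previous step singles out $a=b$ as the unique optimal terminal value, and for that value Proposition~\ref{PROP1} exhibits the minimizer $f\s$ of $I_{x}$ over $S_{b}$. This minimizer is unique: writing $I_{x}(f)=(2\sigma^{2})^{-1}\|f'+\gamma f-\alpha\|_{L_{2}([0,T])}^{2}$, the functional is strictly convex on the affine set $H_{x}\cap S_{b}$ because the linear map $g\mapsto g'+\gamma g$ is injective on $\{g:\,g(0)=g(T)=0,\ g'\in L_{2}([0,T])\}$; equivalently, the Euler--Lagrange {\sc ode} together with the boundary conditions $f(0)=x$, $f(T)=b$ has a unique solution. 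Hence $f\s$ with $a$ replaced by $b$ is the minimal cost path, and its nonnegativity on $[0,T]$ established in Proposition~\ref{PROP1} is precisely the feature that will let us transfer the decay rate to $\ROU$ in the next section.

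As for the main difficulty: there is in fact little left to prove here, since Propositions~\ref{PROP1} and~\ref{PROP2} carry the analytic weight. The only points deserving a moment's care are (i)~verifying that the outer infimum over $a$ is genuinely realized at the left endpoint $a=b$ rather than somewhere in the interior $(b,\infty)$ --- this is exactly where the hypothesis $b>x(T)$ enters --- and (ii)~being precise about uniqueness of the minimizer, so that the phrase ``the minimal cost path'' is well defined.
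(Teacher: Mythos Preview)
Your proposal is correct and follows essentially the same route as the paper: apply the Freidlin--Wentzell sample-path {\sc ldp} to $S$, invoke Proposition~\ref{PROP2} to match the upper and lower bounds, and then use Proposition~\ref{PROP1} together with $b>x(T)$ to evaluate the infimum at $a=b$. Your write-up is more explicit than the paper's (which simply refers back to the computation in the proof of Proposition~\ref{PROP2}), and your added remark on uniqueness of the minimizer via strict convexity is a nice touch the paper does not spell out.
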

\begin{proof}
Apply  `Freidlin-Wentzell' to the $I_{x}$-continuity set $S$:
\begin{eqnarray*}
\lim_{\epsilon\rightarrow 0}\epsilon \log\mathbb{P}_x(X^{\epsilon}_{T}\geqslant b)
&=& \lim_{\epsilon\rightarrow 0}\epsilon \log\mathbb{P}_x(X^{\epsilon}_\centerdot\in S)
\\&=&-\inf_{f \in S} I_{x}(f)= -\inf_{a\geqslant b}\inf_{f \in S_{a}}I_{x}(f).
\end{eqnarray*}
By the computations in the proof of Proposition~\ref{PROP2}, we obtain the desired result. The minimal cost path is directly obtained from Proposition~\ref{PROP1}. \end{proof}

\vb

We mentioned in the introduction that there is an alternative method to compute
the decay rate under study. It follows relatively directly from the fact that  $X^{\epsilon}_{T}$ (with $X^{\epsilon}_{0}=x$) is normally distributed with mean
$\mu_{T}=x(T)=\frac{\alpha}{\gamma}(1-e^{-\gamma T})+xe^{-\gamma T}$ and variance $\sigma_{T}^{2}(\epsilon)=\frac{\epsilon \sigma^2}{2\gamma} (1-e^{-2 \gamma T})$, in conjunction with the standard inequality \cite[p.\ 19]{MR1335456}
\begin{equation*}
\frac{1}{y+y^{-1}}e^{-\frac{1}{2}y^{2}}\leqslant\int_{y}^{\infty}e^{-\frac{1}{2}t^{2}}\DD t\leqslant \frac{1}{y}e^{-\frac{1}{2}y^{2}}.
\end{equation*}
We have followed our sample-path approach, though, for two reasons: (i)~the resulting most likely path is interesting in itself, as it gives insight into the behavior of the system conditional on the rare event, but, more importantly, (ii)~it is useful when studying the counterpart of the decay rate for $\ROU$ (rather than $\OU$), which we pursue in Section \ref{SEC3}.

We also note that
\begin{equation*}
\lim_{T \rightarrow \infty} \lim_{\epsilon \rightarrow 0}\epsilon \log\mathbb{P}_x(X^{\epsilon}_{T}\geqslant b)
=-\frac{(b-\frac{\alpha}{\gamma})^{2}}{\sigma^{2}/\gamma}.
\end{equation*}\\
It is known that the steady-state distribution of $X^{\epsilon}_{t}$ with $X^{\epsilon}_{0}=x$ is normally distributed with mean ${\alpha}/{\gamma}$ and variance ${\epsilon \,\sigma^2}/({2\gamma})$. 
We conclude that this shows that the result is invariant under changing the orders of taking limits ($T\to\infty$ and $\epsilon\to 0$).

\section{Transient asymptotics for reflected Ornstein-Uhlenbeck}\label{SEC3}
This section determines the decay rate (\ref{DRA}) for $\ROU$. For the moment we consider a setting more general than $\OU$ and $\ROU$, namely stochastic differential equations with reflecting boundary conditions.
%Let $D^{\circ} \in \mathbb{R}$ be a connected open set, $\partial D$ and $D$ denote its boundary and closure.
%Let $\nu(x)$ denote the function giving the inward normal at $x \in \partial D$, i.e. $\nu \in \nu(x)$ if and only if 
%$\forall x'\in D, \nu \cdot (x'-x)\geqslant 0$ and $|\nu|=1$.
Let $D^{\circ} \in \mathbb{R}$ be an open interval, $\partial D$ and $D$ denote its boundary and closure.
Let $\nu(x)$ denote the function giving the inward normal at $x \in \partial D$, i.e.\ $\nu(x)=1$ if $x$ is a finite left endpoint of $D$ and $\nu(x)=-1$ if $x$ is a finite right endpoint of $D$.
The reflected diffusion $H^{\epsilon}$ w.r.t. $D$ is defined as the unique strong solution to 
\begin{equation*}
\mathrm{d}H_{t}^{\epsilon}= b(H_{t}^{\epsilon})\mathrm{d}t+ \sqrt{\epsilon}\sigma \mathrm{d}B_{t}+\mathrm{d}\xi_{t}^{\epsilon}, \:\:\:\:H_{0}^{\epsilon}=x\in D,
\end{equation*}
where $|\xi^{\epsilon}|_{t}=\int_{0}^{t}\one_{\partial D}(H_{s}^{\epsilon})\mathrm{d}|\xi^{\epsilon}|_{s}$ and $\xi_{t}^{\epsilon}=\int_{0}^{t}\nu(H_{s})\mathrm{d}|\xi^{\epsilon}|_{s}$. Here
$|\xi^{\epsilon}|_{t}$ denotes the total variation of $\xi^{\epsilon}$ by time $t$. We assume that $b(\cdot)$ is uniformly Lipschitz continuous and grows no faster than linearly, 
and $\sigma$ is a nonzero constant. The existence and uniqueness of the strong solution is proved in \cite{MR529332}.

Next we consider the solution to the deterministic Skorokhod problem for $D$: given $\alpha  \in C_{[0, \infty)}(\mathbb{R})$, there exists a unique pair $(h, \beta)$ such that $h \in C_{[0, \infty)}(D)$, and
$\beta \in C_{[0, \infty)}(\mathbb{R})$ of locally bounded variation that satisfy
\begin{equation*}
h_t=\alpha_t+\beta_t, \:\:\:\:|\beta|_{t}=\int_{0}^{t}\one_{\partial D}(h_{s})\mathrm{d}|\beta|_{s}, \:\:\:\:\beta_{t}=\int_{0}^{t}\nu(h_{s})\mathrm{d}|\beta|_{s}
\end{equation*}
We now recall the sample-path {\sc ldp} for the reflected diffusion process, since it is considerably less known than the (standard) Freidlin-Wentzell theorem for the non-reflected case. We denote by $H^+_x$ the nonnegative functions in $H_x$ and by $\omega$ a function from $[0,T]$ to $\rr$.

\begin{theorem}\label{LDPRD} \emph{(Doss and Priouret \cite[Thm.\ 4.2]{MR770425})} If $b(\cdot)$ is uniformly Lipschitz continuous and bounded, and $\sigma$ is a nonzero constant, 
then $H^{\epsilon}$ satisfies the {\sc ldp} in $C_{[0, T]}(D)$ with the rate function 
\begin{equation*}
I(h)=\inf_{\omega \geqslant 0}\frac{1}{2\sigma^{2}} \int_{0}^{T} (h'_{t}-b(h_{t})-\nu(h_{t})\omega_{t}\one_{\partial D}(h_{t}))^{2}\mathrm{d}t.
\end{equation*}
if $h \in H_x^+$ and $\infty$ else.
%, $\beta \in H^{0}$. 
%where $\beta'=\nu(h_{t})\omega_{t}1_{\partial D}(h_{t})$ and $\infty$ if else. 
\end{theorem}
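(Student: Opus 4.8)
The plan is to realise $H^{\epsilon}$ as a continuous deterministic functional of its driving Brownian path, so that the {\sc ldp} follows from Schilder's theorem by the contraction principle; the displayed variational formula is then obtained by reading off which perturbations of the drift are compatible with the Skorokhod conditions.

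First I would recall that $\sqrt{\epsilon}\,\sigma B$ satisfies the {\sc ldp} in $(C_{[0,T]}(\mathbb{R}),\|\cdot\|_{\infty})$ with the good rate function $g\mapsto(2\sigma^{2})^{-1}\int_{0}^{T}(g'_{t})^{2}\,\DD t$ for $g\in H_{0}$ and $+\infty$ otherwise \cite{MR1619036}. Next I would introduce the map $\Phi$ that sends $g\in C_{[0,T]}(\mathbb{R})$ to the unique $h\in C_{[0,T]}(D)$ for which there exists a $\beta$ of locally bounded variation with $h_{t}=x+\int_{0}^{t}b(h_{s})\,\DD s+g_{t}+\beta_{t}$ and $(h,\beta)$ satisfying the Skorokhod conditions. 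Combining the uniform-norm Lipschitz continuity of the Skorokhod reflection map for $D$ (with a constant depending only on $D$), the Lipschitz continuity and boundedness of $b$, and a Gronwall estimate, one shows that this fixed-point problem has a unique solution and that $\Phi$ is Lipschitz from $(C_{[0,T]}(\mathbb{R}),\|\cdot\|_{\infty})$ into $(C_{[0,T]}(D),\|\cdot\|_{\infty})$. Since the noise enters additively there is no It\^o correction, so pathwise uniqueness for the reflected {\sc sde} (\cite{MR529332}) yields $H^{\epsilon}=\Phi(\sqrt{\epsilon}\,\sigma B)$ almost surely, and the contraction principle gives the {\sc ldp} for $H^{\epsilon}$ in $C_{[0,T]}(D)$ with good rate function $I(h)=\inf\{(2\sigma^{2})^{-1}\int_{0}^{T}(g'_{t})^{2}\,\DD t:\ g\in H_{0},\ \Phi(g)=h\}$.

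It then remains to evaluate this infimum. If the infimand is finite then $g$ is absolutely continuous, hence so is $\beta:=h-x-\int_{0}^{\cdot}b(h_{s})\,\DD s-g$; the Skorokhod conditions $|\beta|_{t}=\int_{0}^{t}\one_{\partial D}(h_{s})\,\DD|\beta|_{s}$ and $\beta_{t}=\int_{0}^{t}\nu(h_{s})\,\DD|\beta|_{s}$ then force $\dot{\beta}_{t}=\nu(h_{t})\,\omega_{t}\,\one_{\partial D}(h_{t})$ for some measurable $\omega\geq 0$, so that $g'_{t}=h'_{t}-b(h_{t})-\nu(h_{t})\,\omega_{t}\,\one_{\partial D}(h_{t})$. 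Conversely, given any measurable $\omega\geq 0$ for which this right-hand side is square-integrable, the pair $(h,\beta)$ with $\beta_{t}=\int_{0}^{t}\nu(h_{s})\,\omega_{s}\,\one_{\partial D}(h_{s})\,\DD s$ satisfies the Skorokhod conditions for the input $\alpha_{t}=h_{t}-\beta_{t}$, so by uniqueness of the Skorokhod solution the corresponding $g$ is a preimage of $h$ under $\Phi$. Therefore, for $h\in H_{x}^{+}$, $I(h)=\inf_{\omega\geq 0}(2\sigma^{2})^{-1}\int_{0}^{T}(h'_{t}-b(h_{t})-\nu(h_{t})\,\omega_{t}\,\one_{\partial D}(h_{t}))^{2}\,\DD t$, while $I(h)=+\infty$ for $h\notin H_{x}^{+}$ because every path in the range of $\Phi$ is absolutely continuous and starts at $x$.

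The main obstacle is the continuity (in fact the Lipschitz property) of $\Phi$ in the uniform topology. For a single reflecting boundary this is the classical Lipschitz bound for one-sided Skorokhod reflection, but in the doubly-reflected case $D=[0,d]$ one needs the sharper two-sided reflection estimate, after which the state-dependent drift is absorbed by Gronwall inside the Picard iteration; boundedness of $b$, assumed in the theorem, is precisely what makes these constants uniform and the exponential tightness inherited from Schilder's theorem harmless. A minor additional point is that, $\Phi$ being a genuine continuous map between Polish spaces, the contraction principle applies verbatim and the resulting $I$ is automatically a good rate function.
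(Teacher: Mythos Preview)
The paper does not give its own proof of this theorem; it is stated as a citation of Doss and Priouret \cite[Thm.\ 4.2]{MR770425} and left unproved. There is therefore nothing in the paper to compare your argument against directly. That said, the paper does hint at the underlying mechanism in a later remark, where it notes that ``for constant $\sigma$, this proof can be modified in the sense that one can directly apply the contraction principle, which needs uniform continuity of $b(\cdot)$ only.'' Your proposal---Schilder for $\sqrt{\epsilon}\,\sigma B$, then push forward through the Lipschitz map $\Phi$ built from the Skorokhod reflection and a Gronwall/fixed-point argument---is exactly this contraction-principle route, and is the strategy of the cited references \cite{MR770425,MR899955,MR0397893}.

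Your identification of the variational form of $I$ is also correct: once $g$ is absolutely continuous, so is $\beta$, and the Skorokhod constraints force $\dot\beta_t=\nu(h_t)\,\omega_t\,\one_{\partial D}(h_t)$ with $\omega\geq 0$; the converse direction uses uniqueness of the Skorokhod solution as you say. One cosmetic point: the class $H_x^+$ in the statement really means absolutely continuous paths starting at $x$ and taking values in $D$ (for $D=[0,d]$ the paper later writes $H_x^{++}$), so in your last sentence ``every path in the range of $\Phi$ is absolutely continuous and starts at $x$'' should also record that it stays in $D$.
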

For reflected diffusions with a single reflecting boundary at $0$, we can identify $\omega(t)$ and have the following explicit expression of
the rate function. As usual, we define $x^{+}=\max\{x, 0\}$ and $x^{-}=-\min\{x, 0\}$.
 \begin{proposition}\label{PROP5} Let $D=[0, \infty)$.
When $b(0)\geqslant 0$, $H^{\epsilon}$ satisfies the LDP in $C_{[0,T]}([0, \infty))$ with the rate function 
\[
I^{+}(h)=
\frac{1}{2\sigma^{2}}\int_{0}^{T}\left(h'_{t}-b(h_{t})\right)^{2}\mathrm{d}t
\]
if $h \in H_x^+$ and $\infty$ else.
When $b(0)<0$, $H^{\epsilon}$ satisfies the LDP in $C_{[0,T]}([0, \infty))$ with the rate function 
\[
I^{+}(h)=
\frac{1}{2\sigma^{2}}\int_{0}^{T}\left(h'_{t}-b(h_{t})\right)^{2}\mathrm{d}t-\frac{1}{2\sigma^{2}}b(0)^{2}\int_{0}^{T}\one_{\{0\}}(h_{t})\,\mathrm{d}t.\]
if $h \in H_x^+$ and $\infty$ else. In short, for $h\in H_x^+$ and $b(0)\in{\mathbb R}$ we have
\[
I^{+}(h)=
\frac{1}{2\sigma^{2}}\int_{0}^{T}\left(h'_{t}-b(h_{t})\right)^{2}\mathrm{d}t-\frac{1}{2\sigma^{2}}(b(0)^-)^2\int_{0}^{T}\one_{\{0\}}(h_{t})\,\mathrm{d}t.
\]
\end{proposition}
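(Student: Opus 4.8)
The plan is to start from the Doss--Priouret rate function of Theorem~\ref{LDPRD} and carry out the minimisation over $\omega\geqslant 0$ explicitly in the special case $D=[0,\infty)$. Here $\partial D=\{0\}$ and, by definition of the inward normal, $\nu(0)=1$; hence for $h\in H_x^{+}$,
\[
I^{+}(h)=\inf_{\omega\geqslant 0}\frac{1}{2\sigma^{2}}\int_{0}^{T}\big(h'_{t}-b(h_{t})-\omega_{t}\one_{\{0\}}(h_{t})\big)^{2}\,\mathrm{d}t .
\]
The crucial structural remark is that the integrand at time $t$ involves $\omega$ only through $\omega_{t}$, and only on the set $\{t:h_{t}=0\}$; everywhere else the choice of $\omega$ is irrelevant. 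So the functional infimum should reduce to a pointwise one, provided $\inf_{\omega}$ can be moved inside the integral, which I would justify at the end via nonnegativity of the integrand together with an explicit measurable minimiser.

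Next I would invoke the elementary fact that for $h\in H_x^{+}$ one has $h'_{t}=0$ for almost every $t$ with $h_{t}=0$: since $h\geqslant 0$, every such $t$ is a (global, hence local) minimum of $h$, so the derivative vanishes there wherever it exists, and $h$ is differentiable a.e.\ because it is absolutely continuous. Consequently, on $\{h_{t}=0\}$ the integrand equals $(b(0)+\omega_{t})^{2}$ for a.e.\ $t$, whereas on $\{h_{t}>0\}$ it equals $(h'_{t}-b(h_{t}))^{2}$ and does not depend on $\omega$ at all.

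For each fixed $t$ with $h_{t}=0$, minimising $w\mapsto(b(0)+w)^{2}$ over $w\geqslant 0$ gives the value $(b(0)^{+})^{2}$, attained at $w=b(0)^{-}$; the pointwise minimiser $\omega^{\star}_{t}:=b(0)^{-}\one_{\{h_{t}=0\}}$ is a nonnegative measurable function, so the exchange of $\inf$ and $\int$ is legitimate and
\[
I^{+}(h)=\frac{1}{2\sigma^{2}}\int_{\{h_{t}>0\}}(h'_{t}-b(h_{t}))^{2}\,\mathrm{d}t+\frac{1}{2\sigma^{2}}(b(0)^{+})^{2}\int_{0}^{T}\one_{\{0\}}(h_{t})\,\mathrm{d}t .
\]
Using once more that $h'_{t}=0$ a.e.\ on $\{h_{t}=0\}$, the first integral equals $\int_{0}^{T}(h'_{t}-b(h_{t}))^{2}\,\mathrm{d}t-b(0)^{2}\int_{0}^{T}\one_{\{0\}}(h_{t})\,\mathrm{d}t$, and since $(b(0)^{+})^{2}-b(0)^{2}=-(b(0)^{-})^{2}$ (because $b(0)^{+}b(0)^{-}=0$) this rearranges to the stated unified formula. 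The two special cases then follow immediately: if $b(0)\geqslant 0$ then $b(0)^{-}=0$ and the correction term disappears, while if $b(0)<0$ then $(b(0)^{-})^{2}=b(0)^{2}$.

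The only genuinely delicate points are the two measure-theoretic facts used above — that $h'_{t}=0$ a.e.\ on $\{h_{t}=0\}$ for absolutely continuous nonnegative $h$, and the interchange of infimum and integration — and I expect the former to be the one worth spelling out carefully, while everything else is bookkeeping. (One should also note that Theorem~\ref{LDPRD} is stated for bounded drift, so in the intended application to $\ROU$, where $b(x)=\alpha-\gamma x$, this proposition is used after the customary truncation of $b$ outside a large compact interval; this affects neither the form nor the value of $I^{+}$.)
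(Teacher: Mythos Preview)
Your proof is correct and follows essentially the same route as the paper: start from the Doss--Priouret rate function, carry out the pointwise minimisation over $\omega\geqslant 0$, and use that $h'_t=0$ a.e.\ on $\{h_t=0\}$ to simplify. The only cosmetic difference is the order of operations---the paper first minimises (finding $\omega^\star_t=(h'_t-b(h_t))^+$) and then invokes $h'_t=0$ on the boundary, whereas you invoke it first---and your closing remark about the boundedness hypothesis on $b$ is exactly the point the paper addresses immediately after the proposition.
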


\begin{proof} In this case, $D=[0, \infty)$, $\partial D=\{0\}$ and $\nu(0)=1$. The rate function becomes
 \begin{equation*}
I^{+}(h)=\inf_{\{\omega_{t}\geqslant 0\}}\frac{1}{2\sigma^{2}} \int_{0}^{T} (h_{t}^{'}-b(h_{t})-\omega_{t}\one_{\{0\}}(h_{t}))^{2}\,\mathrm{d}t.
\end{equation*}
We minimize for each $t$ separately under the integral.
If $h'_{t}-b(h_{t})<0$, then $\omega_{t}=0$ is optimal. 
If $h'_{t}-b(h_{t})\geqslant 0$ and $h_{t}>0$, then $\one_{\{0\}}(h_{t})\omega_{t}\equiv 0$, which means that any value of $\omega$ is optimal.
If $h'_{t}-b(h_{t})\geqslant 0$ and $h_{t}=0$, then $\omega_{t}=h'_{t}-b(h_{t})$ is optimal.
Hence $\omega^{\star}_{t}=(h'_{t}-b(h_{t}))^{+}$ is the optimizer. It gives the following explicit expression:
\begin{equation*}
I^{+}(h)=
\frac{1}{2\sigma^{2}}\int_{0}^{T}\left(h'_{t}-b(h_{t})-\one_{\{0\}}(h_{t})(h'_{t}-b(h_{t}))^{+}\right)^{2}\mathrm{d}t
\end{equation*}
if $h \in H_x^+$ and $\infty$ else. 
For any $h \in C_{[0,T]}([0, \infty))$ which is differentiable a.e., note that $h'_{t}=0$ if $h_{t}=0$. 
Then we have
\begin{eqnarray*}
\lefteqn{I^{+}(h)
= \frac{1}{2\sigma^{2}}\int_{0}^{T}\left(h'_{t}-b(h_{t})-\one_{\{0\}}(h_{t})b(0)^{-}\right)^{2}\mathrm{d}t}\\
& = & \frac{1}{2\sigma^{2}}\int_{0}^{T}\left(h'_{t}-b(h_{t})\right)^{2}\mathrm{d}t+\:\frac{1}{2\sigma^{2}}\int_{0}^{T}\one_{\{0\} }(h_{t})\left(b(0)^{-}\right)^{2}\mathrm{d}t+\frac{1}{\sigma^{2}}\int_{0}^{T}\one_{\{0\}}(h_{t})b(0)^{-}b(0)\,\mathrm{d}t.
\end{eqnarray*}
When $b(0)\geqslant 0$, the last two terms are zero, and for $b(0)<0$ they sum to
\[
-\frac{1}{2\sigma^2}b(0)^2\int_0^T\one_{\{0\}}(h_{t})\,\DD t.
\]
This completes our proof.
\end{proof}

\vb

Theorem~\ref{LDPRD} requires $b(\cdot)$ to be bounded, but this is a condition that $\ROU$ does not satisfy. A careful inspection of the proof 
in \cite{MR770425} or \cite{MR899955}, however, reveals that if $\sigma$ is a constant the boundedness requirement for $b(\cdot)$  can be dropped.
Specifically, for constant $\sigma$, this proof can be modified in the sense that one can directly apply the contraction principle, which needs uniform continuity of $b(\cdot)$ only.
As a result, Proposition~\ref{PROP5} is valid for $\ROU$. Above we observed (i) that the most likely path for $\OU$ was non-negative (Proposition~\ref{PROP1}),
(ii) the rate functions $I$ and $I^+$ for $\OU$ and $\ROU$ are the same as long as their arguments are nonnegative paths on $[0,T]$ (Proposition~\ref{PROP5}).
This suggests that the decay rates for $\OU$ and $\ROU$ (and the corresponding most likely paths) coincide.

The idea is now that we find the decay rate (\ref{DRA}) for $\ROU$ by using the sample-path results that we derived in the previous section for $\OU$.
Recall that the zeroth-order approximation of $\OU$ is $x(t)={\alpha}/{\gamma}+(x-{\alpha}/{\gamma})e^{-\gamma t}$. It is readily checked that $x(t)>0$ when the starting point $x\geqslant 0$. So we still assume $b> x(T)$ in the decay rate \label{DR} for $\ROU$. We define $S^{+}:=\{f \in C_{[0,T]}([0, \infty)): f(0)=x, f(T)\geqslant b\}$, corresponding to the rare event $\{Y^{\epsilon}_{\centerdot}\in S^{+}\}$,  so as to compute the decay rate (\ref{DRA}); the set $S_a^+$ is defined as $\{f \in C_{[0,T]}([0, \infty)):f(0)=x, f(T)=a \}$. Below we keep the notation $\pp_x$ for probabilities of events in terms of $Y^\epsilon$ when this process starts in $x$.

\begin{theorem} \label{PROP6} Let $b> x(T)$. Then, similar to the result of Proposition~\ref{PROP3}, \begin{equation*}
\lim_{\epsilon\rightarrow 0}\epsilon \log\mathbb{P}_x(Y^{\epsilon}_{T}\geqslant b)=-\frac{[b-(\frac{\alpha}{\gamma}+(x-\frac{\alpha}{\gamma})e^{-\gamma T})]^{2}}{[1-e^{-2\gamma T}] (\sigma^{2}/\gamma)}.
\end{equation*}
Moreover, the minimal cost path is as given in Proposition~\ref{PROP1} (with $a$ replaced by $b$).
\end{theorem}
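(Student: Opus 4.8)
The plan is to follow, almost line for line, the three-step route used for $\OU$ in Propositions~\ref{PROP1}--\ref{PROP3}, with the Freidlin--Wentzell rate function $I_x$ replaced by the reflected rate function $I^{+}$ furnished by Proposition~\ref{PROP5}. Everything rests on a single observation: for $\ROU$ the drift is $b(y)=\alpha-\gamma y$, so $b(0)=\alpha>0$ by the standing assumption, which puts us in the first (benign) case of Proposition~\ref{PROP5}. Consequently, for every nonnegative path $h$,
\[
I^{+}(h)=\frac{1}{2\sigma^{2}}\int_{0}^{T}\big(h'_{t}-\alpha+\gamma h_{t}\big)^{2}\,\DD t=I_{x}(h),
\]
with both sides equal to $\infty$ exactly when $h\notin H_x^{+}$ (indeed $H_x^{+}=H_x\cap C_{[0,T]}([0,\infty))$). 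In short, the $\OU$ and $\ROU$ rate functions coincide on all of $C_{[0,T]}([0,\infty))$. I would also invoke the remark preceding the theorem: although Theorem~\ref{LDPRD}, and hence Proposition~\ref{PROP5}, is stated for bounded $b(\cdot)$, for constant $\sigma$ the boundedness hypothesis may be dropped, so Proposition~\ref{PROP5} does apply to $\ROU$.

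First I would solve the variational problem: for each $a\geqslant b$, show $\inf_{f\in S_a^{+}}I^{+}(f)=\inf_{f\in S_a}I_x(f)$, realised by the \emph{same} optimizer $f\s$ as in Proposition~\ref{PROP1}. The bound ``$\geqslant$'' is immediate from $S_a^{+}\subseteq S_a$ together with $I^{+}=I_x$ on $C_{[0,T]}([0,\infty))$. For ``$\leqslant$'', the key input is the last sentence of Proposition~\ref{PROP1}: the $\OU$ optimizer $f\s$ (which is smooth, with $f\s(0)=x$ and $f\s(T)=a$) is nonnegative on $[0,T]$ whenever $x\geqslant0$, hence $f\s\in S_a^{+}\cap H_x^{+}$ and $I^{+}(f\s)=I_x(f\s)=\inf_{f\in S_a}I_x(f)$. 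Thus $\inf_{f\in S_a^{+}}I^{+}(f)=[a-x(T)]^{2}\big/\big([1-e^{-2\gamma T}](\sigma^{2}/\gamma)\big)$, attained at $f\s$.

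Next I would check that $S^{+}=\bigcup_{a\geqslant b}S_a^{+}$ is an $I^{+}$-continuity set, repeating the argument of Proposition~\ref{PROP2} inside $C_{[0,T]}([0,\infty))$. As there, one works in the subspace $\bar S_x^{+}:=\{f\in C_{[0,T]}([0,\infty)):f(0)=x\}$ --- legitimate because $Y_0^{\epsilon}=x$ a.s., so the sample-path $\ROU$ {\sc ldp} restricts to this subspace; since the coordinate map $f\mapsto f(T)$ is continuous and open, $S^{+}$ is closed in $\bar S_x^{+}$ with relative interior $\{f\in\bar S_x^{+}:f(T)>b\}$. By the previous step, $\inf_{f\in\mathrm{cl}\,S^{+}}I^{+}(f)=\inf_{a\geqslant b}\inf_{f\in S_a^{+}}I^{+}(f)$ and $\inf_{f\in\mathrm{int}\,S^{+}}I^{+}(f)=\inf_{a>b}\inf_{f\in S_a^{+}}I^{+}(f)$; since $a\mapsto[a-x(T)]^{2}$ is increasing for $a\geqslant x(T)$ and $b>x(T)$, both reduce to $[b-x(T)]^{2}\big/\big([1-e^{-2\gamma T}](\sigma^{2}/\gamma)\big)$, so $S^{+}$ is an $I^{+}$-continuity set. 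Finally, writing $\pp_x(Y_T^{\epsilon}\geqslant b)=\pp_x(Y_\centerdot^{\epsilon}\in S^{+})$ and applying the $\ROU$ {\sc ldp} to this continuity set yields $\lim_{\epsilon\to0}\epsilon\log\pp_x(Y_T^{\epsilon}\geqslant b)=-\inf_{f\in S^{+}}I^{+}(f)$, i.e.\ the announced value, with minimal cost path $f\s$ (with $a$ replaced by $b$), exactly as in Proposition~\ref{PROP3}.

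I do not expect any step to require heavy computation --- most of the work is already contained in Propositions~\ref{PROP1},~\ref{PROP2} and~\ref{PROP5}. The delicate points are matters of care rather than difficulty: (i) being sure that the drift of $\ROU$ genuinely lands in the case $b(0)\geqslant0$ of Proposition~\ref{PROP5}, so that $I^{+}$ loses its boundary-correction term and reduces to $I_x$ on nonnegative paths, and that Proposition~\ref{PROP5} is applicable at all to the unbounded $\ROU$ drift (the content of the remark above); and (ii) carrying out the continuity-set verification of Proposition~\ref{PROP2} in the constrained path space $C_{[0,T]}([0,\infty))$, which must be performed relative to the subspace $\bar S_x^{+}$ since $\{f:f(0)=x\}$ has empty interior in the ambient space.
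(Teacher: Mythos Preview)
Your proposal is correct and follows essentially the same route as the paper's own proof: invoke Proposition~\ref{PROP5} with $b(0)=\alpha>0$ (together with the remark that the boundedness hypothesis on the drift may be dropped for constant $\sigma$) to identify $I^{+}$ with $I_{x}$ on nonnegative paths, reduce the variational problem $\inf_{h\in S_a^{+}}I^{+}(h)$ to $\inf_{f\in S_a}I_x(f)$ via the inclusion $S_a^{+}\subseteq S_a$ and the nonnegativity of $f^{\star}$ from Proposition~\ref{PROP1}, and then argue as in Proposition~\ref{PROP2} that $S^{+}$ is an $I^{+}$-continuity set before applying the sample-path {\sc ldp}. The only difference is presentational: you spell out the continuity-set argument in the constrained space a bit more carefully than the paper, which simply says ``by an argument similar to the one used in the proof of Proposition~\ref{PROP2}''.
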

\begin{proof}
Since $b(0)=\alpha >0$, by Proposition~\ref{PROP5}, $Y^{\epsilon}$ satisfies the sample path {\sc ldp} in $C_{[0,T]}([0, \infty))$ 
with the rate function
\[
I_{x}^{+}(h)=
\frac{1}{2\sigma^{2}}\int_{0}^{T}\left(h'_{t}-\alpha+\gamma h_{t})\right)^{2}\mathrm{d}t
\]
if $h \in H_x^+$ and $\infty$ else. By an argument that is similar to the one used in 
the proof of Proposition~\ref{PROP2}, $S^{+}$ is an $I^{+}_{x}$-continuity set. Then,
\[
\lim_{\epsilon\rightarrow 0}\epsilon \log\mathbb{P}_x(Y^{\epsilon}_{T}\geqslant b)=\lim_{\epsilon\rightarrow 0}\epsilon \log\mathbb{P}_x(Y^{\epsilon}_{\centerdot}\in S^{+})=-\inf_{h \in S^{+}} I^{+}_{x}(h) =-\inf_{a\geqslant b}\inf_{h \in S^{+}_{a}}I^{+}_{x}(h).
\]
We have 
\begin{eqnarray*}
\inf_{h \in S^{+}_{a}}I^{+}_{x}(h)
&=&\inf \left\{\frac{1}{2\sigma^{2}}\int_{0}^{T}(h'_{t}+\gamma h_{t}-\alpha)^{2}\mathrm{d} t, h\in H_x\cap S^{+}_{a}\right\}\\
&\geqslant& \inf \left\{\frac{1}{2\sigma^{2}}\int_{0}^{T}(h'_{t}+\gamma h_{t}-\alpha)^{2}\mathrm{d} t, h\in H_x\cap S_{a}\right\}\\
&=& \inf_{f \in S_{a}}I_{x}(f).
\end{eqnarray*}
The optimizer $f\s$ of $\inf_{f \in S_{a}}I_{x}(f)$ is always positive, for any starting point $x \geqslant 0$, due to Proposition~\ref{PROP1}. That is, $f\s \in S^{+}_{a}$. 
Conclude that $\inf_{h \in S^{+}_{a}}I^{+}_{x}(h)= \inf_{f \in S_{a}}I_{x}(f).$ 
Then the results follows immediately from Proposition~\ref{PROP1} and $a\geqslant b> x(T)$,
\begin{eqnarray*}
\lim_{\epsilon\rightarrow 0}\epsilon \log\mathbb{P}_x(Y^{\epsilon}_{T}\geqslant b)
&=& -\inf_{a\geqslant b}\frac{[a-(\frac{\alpha}{\gamma}+(x-\frac{\alpha}{\gamma})e^{-\gamma T})]^{2}}{[1-e^{-2\gamma T}] (\sigma^{2}/\gamma)}\\
&=& -\frac{[b-(\frac{\alpha}{\gamma}+(x-\frac{\alpha}{\gamma})e^{-\gamma T})]^{2}}{[1-e^{-2\gamma T}] (\sigma^{2}/\gamma)}.
\end{eqnarray*}
This proves the claim. 
\end{proof}

\section{Transient asymptotics for doubly reflected Ornstein-Uhlenbeck}\label{SEC4}
This section  computes the decay rate (\ref{DRA}), but now for $\DROU$. The case of $\DROU$ corresponds to choose the set $D=[0, d]$ 
in Theorem~\ref{LDPRD}. We can still derive an explicit expression for the optimal $\omega^\star_t$, and hence have the following simplified rate function.
\begin{proposition}\label{PROP9} Given $D=[0, d]$, the rate function $I(h)$ in Theorem~\ref{LDPRD} can be rewritten as follows:
\[
I^{++}(h)=
\frac{1}{2\sigma^{2}}\int_{0}^{T}\left(h'_{t}-b(h_{t})-\one_{\{0\}}(h_{t})b(0)^{-}+\one_{\{d\}}(h_{t})b(d)^{+}
\right)^{2}\mathrm{d}t
\]
if  $h \in H_x^{++}:=\{f\in H_x: 0\leq f\leq d\}$ and $\infty$ else.
\end{proposition}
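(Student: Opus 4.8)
The plan is to follow the route of Proposition~\ref{PROP5}, now carrying out the pointwise minimization over $\omega_{t}\geqslant0$ at both endpoints of $D=[0,d]$. First I would specialize the data: here $\partial D=\{0,d\}$ with inward normals $\nu(0)=1$ and $\nu(d)=-1$, so that for $h\in H_x^{++}$ the rate function of Theorem~\ref{LDPRD} reads
\[
I(h)=\inf_{\omega\geqslant0}\frac{1}{2\sigma^{2}}\int_{0}^{T}\bigl(h'_{t}-b(h_{t})-\one_{\{0\}}(h_{t})\,\omega_{t}+\one_{\{d\}}(h_{t})\,\omega_{t}\bigr)^{2}\,\mathrm{d}t .
\]
Because the integrand at time $t$ depends on $\omega$ only through the scalar $\omega_{t}$, and because the sets $\{t:h_{t}=0\}$ and $\{t:h_{t}=d\}$ are disjoint, the minimization decouples and may be performed pointwise in $t$; the resulting pointwise minimizer $\omega^{\star}$ is measurable in $t$ (indeed square integrable, since $b(h_{\centerdot})$ is bounded and $h'\in L_{2}$), hence an admissible competitor, so the pointwise infimum equals $I(h)$.

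Next I would dispose of the three cases for a.e.\ $t$, exactly as in Proposition~\ref{PROP5}. If $0<h_{t}<d$, both indicators vanish and $\omega_{t}$ is irrelevant, contributing $(h'_{t}-b(h_{t}))^{2}$. If $h_{t}=0$, the integrand is $(h'_{t}-b(h_{t})-\omega_{t})^{2}$, minimized over $\omega_{t}\geqslant0$ at $\omega_{t}=(h'_{t}-b(h_{t}))^{+}$. If $h_{t}=d$, the integrand is $(h'_{t}-b(h_{t})+\omega_{t})^{2}$, minimized at $\omega_{t}=(b(h_{t})-h'_{t})^{+}$. The crucial analytic input, which I would invoke here, is that an absolutely continuous function has derivative zero a.e.\ on each of its level sets; applied to $h$ this gives $h'_{t}=0$ for a.e.\ $t$ with $h_{t}=0$ and for a.e.\ $t$ with $h_{t}=d$. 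Hence on $\{h_{t}=0\}$ the minimizer reduces to $(-b(0))^{+}=b(0)^{-}$ and on $\{h_{t}=d\}$ to $b(d)^{+}$, so that the correction term $-\nu(h_{t})\,\one_{\partial D}(h_{t})\,\omega^{\star}_{t}$ equals $-\one_{\{0\}}(h_{t})\,b(0)^{-}$ on the first set and $+\one_{\{d\}}(h_{t})\,b(d)^{+}$ on the second. Substituting $\omega^{\star}$ back into the integrand yields exactly the claimed expression for $I^{++}(h)$.

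For the ``$\infty$ else'' clause there is nothing to prove beyond a remark: on $C_{[0,T]}([0,d])$ the constraints $0\leqslant h\leqslant d$ hold automatically, so $h\notin H_x^{++}$ is equivalent to $h\notin H_x^{+}$, and Theorem~\ref{LDPRD} already assigns $I(h)=\infty$ in that case. I expect the only genuine subtlety to be the a.e.\ vanishing of $h'$ on the level sets $\{h=0\}$ and $\{h=d\}$ --- this is what makes the formally active values of $h'-b(h)$ at the boundary constant (equal to $-b(0)$ and $-b(d)$), which is precisely what produces the clean indicator terms $b(0)^{-}$ and $b(d)^{+}$. A secondary, purely bookkeeping hazard is tracking the sign flip coming from $\nu(0)=+1$ versus $\nu(d)=-1$; otherwise the argument is a routine extension of Proposition~\ref{PROP5}.
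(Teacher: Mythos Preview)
Your proposal is correct and follows essentially the same approach as the paper: specialize $\nu(0)=1$, $\nu(d)=-1$, minimize the integrand pointwise in $\omega_t\geqslant 0$, and then invoke $h'_t=0$ a.e.\ on the level sets $\{h_t=0\}$ and $\{h_t=d\}$ to reduce the positive/negative-part expressions to the constants $b(0)^-$ and $b(d)^+$. The only cosmetic difference is that the paper splits cases first by the sign of $h'_t-b(h_t)$ and then by the location of $h_t$, whereas you split by location first; both arrive at the same intermediate formula before the level-set simplification.
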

\begin{proof}
Since $\partial D = \{0, d\}$, the rate function becomes
\begin{equation*}
I^{++}(h)=
\inf_{\omega\geqslant 0}\frac{1}{2\sigma^{2}}\int_{0}^{T}\left(h'_{t}- b(h_{t})-[\one_{\{0\}}(h_{t})-\one_{\{d\}}(h_{t})]\omega_{t}\right)^{2}\mathrm{d}t.
\end{equation*}
Let $\omega^{\star}$ denote the optimizer of the above problem. We discuss the value of $\omega^{\star}_t$ in two cases.
\begin{itemize}\item
\emph{Case 1}: $h'_{t}-b(h_{t})<0$.
If $h_{t}\in (0,d)$, then $\omega\s_{t}$ can be any value; if $h_{t}=0$, then $\omega\s_{t}=0$; if $h_{t}=d$, then $\omega\s_{t}=-(h'_{t}-b(h_{t}))$.
\item \emph{Case 2}: $h'_{t}-b(h_{t})\geqslant 0$.
If $h_{t}\in (0, d)$, then $\omega\s_{t}$ can be any value; if $h_{t}= d$, then $\omega\s_{t}=0$; if $h_{t}=0$, then $\omega\s_{t}=h'_{t}-b(h_{t})$.
\end{itemize}
As a consequence we have the following explicit expression:
\begin{equation*}
I^{++}(h)=
\frac{1}{2\sigma^{2}}\int_{0}^{T}\left(h'_{t}-b(h_{t})-\one_{\{0\}}(h_{t})(h'_{t}- b(h_{t}))^{+}+\one_{\{d\}}(h_{t})(h'_{t}- b(h_{t}))^{-}\right)^{2}\mathrm{d}t
\end{equation*}
if $h \in H_x^{++}$ and $\infty$ else. Also, $h \in [0, d]$ and $h$ is differentiable a.e. imply that $\forall t\in (0, T)$, $h'_{t}=0$ 
when $h_{t}=0$ or $h_{t}=d$. So the above expression can be further simplified to
\begin{equation*}
I^{++}(h)=
\frac{1}{2\sigma^{2}}\int_{0}^{T}\left(h'_{t}-b(h_{t})-\one_{\{0\}}(h_{t})(- b(0))^{+}+\one_{\{d\}}(h_{t})(- b(d))^{-}\right)^{2}\mathrm{d}t
\end{equation*}
if $h \in H_x^{++}$ and $\infty$ else.  
\end{proof}
\vb

For $\DROU$, $b(\cdot)$ is bounded, and as a consequence it fulfills all requirements in 
Theorem~\ref{LDPRD} and its rate function can be obtained by Proposition \ref{PROP9} directly.
Now, by a similar argument as employed in the last section, we prove that the decay rates (\ref{DRA}) for $\OU$ and $\DROU$ (and the corresponding most likely paths) coincide. 
Recall from the introduction that we have assumed ${\alpha}/{\gamma}<d$ for $\DROU$  throughout this paper. Under this assumption, the zeroth-order approximation  $x(t)$ belongs to $(0, b)$ when the starting point $x$ is in $[0, d]$. 

We consider crossing levels $d\geqslant a \geqslant b> x(T)$. Define \[S^{++}:=\{f \in C_{[0,T]}([0, d]): f(0)=x, f(T)\geqslant b\},\] so that our rare event corresponds to $Z^{\epsilon}_{\centerdot}\in S^{++}$; 
the set $S_a^{++}$ is defined as $\{f \in C_{[0,T]}([0, d]):f(0)=x, f(T)=a \}$. \iffalse Below we use the notation $\mathbb{P}_{x}(\cdot)$ for the probability of an event in terms of $Z$ if $Z$ starts in $x$. \fi Finally, we arrive at the main result for $\DROU$.

\begin{theorem} Let $d\geqslant b> x(T)$. Then\begin{equation*}
\lim_{\epsilon\rightarrow 0}\epsilon \log\mathbb{P}_x(Z^{\epsilon}_{T}\geqslant b)=-\frac{[b-(\frac{\alpha}{\gamma}+(x-\frac{\alpha}{\gamma})e^{-\gamma T})]^{2}}{[1-e^{-2\gamma T}] \sigma^{2}/\gamma}.
\end{equation*}
Moreover, the minimal cost path is the one given in Proposition~\ref{PROP3}.
\end{theorem}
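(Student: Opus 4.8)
The proof will follow the proof of Theorem~\ref{PROP6} almost word for word, with the single-boundary rate function replaced by the two-boundary one of Proposition~\ref{PROP9}. First I would instantiate Proposition~\ref{PROP9} with $b(y)=\alpha-\gamma y$ and $D=[0,d]$. Since $b(0)=\alpha>0$ we have $b(0)^-=0$, and since the standing hypothesis $\alpha/\gamma<d$ forces $b(d)=\alpha-\gamma d<0$ we also have $b(d)^+=0$; hence both boundary correction terms in $I^{++}$ vanish, and
\[
I^{++}_x(h)=\frac{1}{2\sigma^2}\int_0^T\big(h'_t-\alpha+\gamma h_t\big)^2\,\DD t
\]
for $h\in H_x^{++}$, and $\infty$ otherwise. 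This is precisely the $\OU$ rate function $I_x$ restricted to paths valued in $[0,d]$. Moreover $b(\cdot)$ is bounded on $[0,d]$, so --- in contrast with the $\ROU$ case --- Theorem~\ref{LDPRD} applies directly, and $Z^\epsilon$ satisfies the sample-path {\sc ldp} in $C_{[0,T]}([0,d])$ with rate function $I^{++}_x$.

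Next, by an argument identical to the one in Proposition~\ref{PROP2}, $S^{++}=\bigcup_{b\leqslant a\leqslant d}S^{++}_a$ is an $I^{++}_x$-continuity set in $C_{[0,T]}([0,d])$: the coordinate map $f\mapsto f(T)$ is continuous and open, so relative to $\{f:f(0)=x\}$ the set $S^{++}$ is closed with interior $\{f:f(0)=x,\,f(T)>b\}$, and the two corresponding infima of $I^{++}_x$ coincide by the explicit formula of Proposition~\ref{PROP1} together with the identity established below. (One takes $b<d$ here so that the interior is nonempty; the borderline $b=d$ is degenerate, as then $\{Z^\epsilon_T\geqslant b\}$ is just $\{Z^\epsilon_T=d\}$.) Applying the {\sc ldp} gives
\[
\lim_{\epsilon\rightarrow 0}\epsilon\log\pp_x(Z^\epsilon_T\geqslant b)=-\inf_{f\in S^{++}}I^{++}_x(f)=-\inf_{b\leqslant a\leqslant d}\ \inf_{f\in S^{++}_a}I^{++}_x(f).
\]

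The crucial step is the identity $\inf_{f\in S^{++}_a}I^{++}_x(f)=\inf_{f\in S_a}I_x(f)$ for each $a\in[b,d]$. The inequality ``$\geqslant$'' is immediate, because $H_x^{++}\cap S^{++}_a\subseteq H_x\cap S_a$ and $I^{++}_x$ agrees with $I_x$ on the smaller set. For ``$\leqslant$'' I would invoke the last assertion of Proposition~\ref{PROP1}: since $x\in[0,d]$, $a\in[0,d]$ and $\alpha/\gamma<d$, the $\OU$ optimizer $f\s$ satisfies $f\s(t)\in[0,d]$ for all $t\in[0,T]$, so $f\s\in S^{++}_a$ and $I^{++}_x(f\s)=I_x(f\s)=\inf_{f\in S_a}I_x(f)$. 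With the identity in hand, Proposition~\ref{PROP1} gives $\inf_{f\in S_a}I_x(f)=[a-x(T)]^2/([1-e^{-2\gamma T}](\sigma^2/\gamma))$, which is strictly increasing in $a$ for $a>x(T)$; hence the outer infimum over $a\in[b,d]$ is attained at $a=b$. This yields the stated limit, and the minimal cost path is the one of Proposition~\ref{PROP1} with $a$ replaced by $b$, i.e.\ the path of Proposition~\ref{PROP3}.

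I expect the only genuine obstacle to be the identity $\inf_{S^{++}_a}I^{++}_x=\inf_{S_a}I_x$, whose whole content is the $[0,d]$-confinement of $f\s$ already proved in Proposition~\ref{PROP1}; everything else is a routine transcription of Sections~\ref{SEC2}--\ref{SEC3}. The one piece of bookkeeping that is new relative to the $\ROU$ argument is verifying that the \emph{upper} boundary term in Proposition~\ref{PROP9} also drops out, which is exactly the place where the standing assumption $\alpha/\gamma<d$ is used.
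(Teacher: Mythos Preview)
Your proof is correct and follows essentially the same approach as the paper's: instantiate Proposition~\ref{PROP9} so that both boundary terms vanish (using $\alpha>0$ and $\alpha/\gamma<d$), obtain the {\sc ldp} for $Z^\epsilon$ with rate function equal to the $\OU$ one restricted to $H_x^{++}$, then use the $[0,d]$-confinement of $f\s$ from Proposition~\ref{PROP1} to conclude that $\inf_{S^{++}_a}I^{++}_x=\inf_{S_a}I_x$, and finish as in Theorem~\ref{PROP6}. You are in fact more careful than the paper in spelling out the $I^{++}_x$-continuity argument and in flagging the degenerate endpoint $b=d$; the paper simply writes ``the rest of proof is similar to that of Theorem~\ref{PROP6}.''
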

\begin{proof}
Since $b(0)=\alpha >0$ and $b(d)=\alpha-\gamma d< 0$, by Proposition~\ref{PROP9}, $Z^{\epsilon}$ satisfies the sample-path {\sc ldp} in 
$C_{[0,T]}([0, d])$  with the rate function
\begin{equation*}
I_{x}^{++}(h)=
\frac{1}{2\sigma^{2}}\int_{0}^{T}\left(h'_{t}-\alpha+\gamma h_{t}\right)^{2}\mathrm{d}t
\end{equation*}
if $h\in H_x^{++}$ and $\infty$ else. Due to Proposition~\ref{PROP1}, it holds that $f\s=\arg \inf_{f \in S_{a}}I_{x}(f)$ is in $[0, d]$ on $t \in [0, T]$,
when the starting point $x\in [0, d]$, $a\in [0, d]$ and ${\alpha}/{\gamma}<d$, i.e., $f\s \in S_a^{++}$.
As an immediate consequence, $f\s = \arg \inf_{\phi\in S_a^{++}} I^{++}_x(\phi)$, and $\inf_{h \in S^{++}_{a}}I^{++}_{x}(h)= \inf_{f \in S_{a}}I_{x}(f).$ The rest of proof is similar to that of 
Theorem~\ref{PROP6}. \end{proof}
\vb

As a side remark, we mention that if $d < {\alpha}/{\gamma}$, then the rate function reads
\begin{equation*}
I_{x}^{++}(h)=
\frac{1}{2\sigma^{2}}\int_{0}^{T}\left(h'_{t}-\alpha+\gamma h_{t}+\one_{\{d\}}(h_{t})(\alpha-\gamma d)\right)^{2}\mathrm{d}t.
\end{equation*}
In this case, the upper boundary does affect the transient behavior of $\DROU$; this is in line with the intuition that in this case the process is `pushed' towards the upper boundary. It is noted, however, that the functional within the integral is not differentiable with respect to $h_t$,
which seems to make analytically solving the associated variational problem challenging.

\section{Central limit theorem of loss process}\label{SEC5}
The main objective of this section is to derive a central limit theorem for the loss process $U_t$, for $t$ large. We do so relying on martingale techniques. A similar procedure can be followed for the idleness process $L_t.$

\vb

Let $h$ be a twice continuously differentiable function on ${\mathbb R}$, and $Z$ be the $\DROU$ process defined earlier. By It\^{o}'s formula, we have:
\[
\DD h(Z_{t})=\big((\alpha-\gamma Z_{t})h'(Z_{t})+\frac{\sigma^2}{2}h''(Z_{t})\big)\DD t+\sigma h'(Z_{t})\DD B_{t}+h'(Z_{t})\DD L_{t}-h'(Z_{t})\DD U_{t}.
\]
Based on the key properties of  $L$ and $U$, this reduces to
\begin{equation}\label{HH}
\DD h(Z_{t})=(\mathcal{L}h)(Z_{t})\mathrm{d}t+\sigma h'(Z_{t})\DD B_{t}+h'(0)\DD L_{t}-h'(d)\DD U_{t},
\end{equation}
where the operator ${\mathcal L}$ is defined through
\begin{equation*}
\mathcal{L}:=(\alpha-\gamma x)\frac{\DD}{\DD x}+\frac{\sigma^2}{2}\frac{\DD^2}{\DD x^2}.
\end{equation*}
We first solve the following {\sc ode} with mixed conditions.
\begin{lemma} \label{lemma7}
The {\sc ode} with real variable right hand side $q\in\rr$
\begin{equation*}
(\mathcal{L}h)(x)=q,\:\:\:\:\:0\leqslant x \leqslant d,
\end{equation*}
such that $h(0)=0,$ $h'(0)=0,$ and $h'(d)=1$,
has the unique solution
\[
q  =q_U:=\frac{\sigma^{2}}{2}
\frac{W(d)}{\int_0^d W(v)\DD v}, \:\:\:\:\:
h(x) =\frac{2q_U}{\sigma^{2}}\int_{0}^{x}\int_0^u
\frac{W(v)}{W(u)}\DD v\,\DD u,
\]
where
\begin{equation*}
W(v):=\exp\left(\frac{2\alpha v}{\sigma^2}-\frac{\gamma v^2}{\sigma^2}\right).
\end{equation*}
\end{lemma}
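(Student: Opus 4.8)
The plan is to solve the linear second-order ODE $(\mathcal{L}h)(x) = q$ explicitly by the standard reduction-of-order trick, using the integrating factor associated with the first-order operator acting on $h'$, and then to pin down $q$ and the two integration constants from the three conditions $h(0)=0$, $h'(0)=0$, $h'(d)=1$. Concretely, writing $g := h'$, the ODE becomes the first-order linear equation $\frac{\sigma^2}{2} g'(x) + (\alpha - \gamma x) g(x) = q$. Multiplying by the integrating factor $W(x) = \exp\!\big(\tfrac{2\alpha x}{\sigma^2} - \tfrac{\gamma x^2}{\sigma^2}\big)$ — whose logarithmic derivative is exactly $\tfrac{2(\alpha-\gamma x)}{\sigma^2}$ — turns the left-hand side into $\tfrac{\sigma^2}{2}\,\tfrac{\DD}{\DD x}\big(W(x) g(x)\big)$, so that $W(x) g(x) = \tfrac{2q}{\sigma^2}\int_0^x W(v)\,\DD v + W(0) g(0)$. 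Since $g(0) = h'(0) = 0$ and $W(0)=1$, this gives $h'(x) = g(x) = \tfrac{2q}{\sigma^2}\,\tfrac{1}{W(x)}\int_0^x W(v)\,\DD v$.

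Next I would integrate once more from $0$ to $x$, using $h(0)=0$, to obtain
\[
h(x) = \frac{2q}{\sigma^2}\int_0^x \int_0^u \frac{W(v)}{W(u)}\,\DD v\,\DD u,
\]
which is the claimed form with $q$ still undetermined. The remaining condition $h'(d)=1$ reads $\tfrac{2q}{\sigma^2}\,\tfrac{1}{W(d)}\int_0^d W(v)\,\DD v = 1$, and solving for $q$ yields precisely $q = q_U = \tfrac{\sigma^2}{2}\,\tfrac{W(d)}{\int_0^d W(v)\,\DD v}$. Substituting this value of $q$ back into the expression for $h$ gives the stated formula. For uniqueness, I would note that $W$ is strictly positive and continuous on $[0,d]$, so every step above is an equivalence: the integrating-factor representation determines $g=h'$ uniquely once $g(0)$ and $q$ are fixed, $h(0)=0$ then determines $h$, and the condition $h'(d)=1$ determines $q$ uniquely because $\int_0^d W(v)\,\DD v > 0$, so $W(d)/\!\int_0^d W(v)\,\DD v$ is a well-defined nonzero number.

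There is essentially no hard obstacle here; the only things to be careful about are bookkeeping of constants and verifying that the integrating factor is correct, i.e. that $W'(x)/W(x) = \tfrac{2(\alpha-\gamma x)}{\sigma^2}$, which is immediate from the definition of $W$. One should also check that the two-term integration constant coming from $h'(0)=0$ really does vanish (it does, since $W(0)=1$), and that positivity of $W$ guarantees the denominator $\int_0^d W(v)\,\DD v$ is strictly positive so that $q_U$ is well-defined and finite. If one wants, the double integral in $h(x)$ can be left as is or rewritten via Fubini as $\int_0^x \tfrac{(x-v)_{\text{type}}}{\cdots}$, but this is cosmetic and not needed for the statement.
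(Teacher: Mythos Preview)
Your proposal is correct and follows essentially the same approach as the paper: both reduce the second-order equation to a first-order linear equation for $h'$, solve it via the integrating factor $W(x)$, integrate once more, and then use the three conditions to pin down the constants and $q$. The only cosmetic difference is that the paper first writes out the general solution with constants $C_1,C_2$ and then imposes the conditions, whereas you impose $h'(0)=0$ immediately when integrating; the content is identical.
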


\begin{proof} By applying reduction of order, the {\sc ode} can be written a system of first-order ordinary differential equations:
\begin{equation*}
h'(x)=f(x),\:\:\:\:\:\:
f'(x)+\frac{2\alpha-2\gamma x}{\sigma^2}f(x)=\frac{2q}{\sigma^2}.
\end{equation*}
The integrating factor of the second first-order {\sc ode} is $W(x)$.
Hence,
\begin{equation*}
f(x)=\frac{C_{1}}{W(x)}+\frac{2q}{\sigma^2}\int_{0}^{x}\frac{W(u)}{W(x)}\DD u.
\end{equation*}
Then the general solution is
\[h(x) =C_{2}+\int_{0}^{x}f(u)\DD u=C_{2}+C_{1}\int_{0}^{x}\frac{1}{W(u)}\DD u+
\frac{2q}{\sigma^{2}}\int_{0}^{x}\int_0^u
\frac{W(v)}{W(u)}\DD v\,\DD u.
\]
Then the initial conditions $h(0)=0, h'(0)=0$ uniquely determine the values of $C_{1}, C_{2}$, while $h'(d)=1$ uniquely determines $q_U$. Hence, we obtained the desired unique solution. \end{proof}

\vb

\begin{proposition} The loss process $U$ satisfies the central limit theorem, with $\eta_U^2$ defined in (\ref{ergo}),
\begin{equation*}
\frac{U_{t}-q_Ut}{\sqrt{t}}\Rightarrow \mathcal{N}(0,\eta_U^{2}), \:\text{as  } t\rightarrow \infty.
\end{equation*}
\end{proposition}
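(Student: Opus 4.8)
The plan is to convert the identity furnished by It\^{o}'s formula into a decomposition of $U_t-q_Ut$ as a martingale plus a bounded remainder, and then to apply a central limit theorem for martingales. Let $h$ be the solution of the {\sc ode} of Lemma~\ref{lemma7}, so that $(\mathcal{L}h)(x)=q_U$ on $[0,d]$, $h'(0)=0$ and $h'(d)=1$. Substituting this $h$ into~(\ref{HH}), the term $h'(0)\DD L_t$ vanishes while $h'(d)\DD U_t=\DD U_t$, so that
\begin{equation*}
\DD h(Z_t)=q_U\,\DD t+\sigma h'(Z_t)\,\DD B_t-\DD U_t .
\end{equation*}
Integrating over $[0,t]$ and rearranging gives the key identity
\begin{equation*}
U_t-q_Ut=\sigma\int_0^t h'(Z_s)\,\DD B_s-\big(h(Z_t)-h(Z_0)\big)=:M_t-R_t ,
\end{equation*}
where, since $h'$ is bounded on the compact interval $[0,d]$, the process $M$ is a continuous square-integrable martingale with quadratic variation $\langle M\rangle_t=\sigma^2\int_0^t h'(Z_s)^2\,\DD s$.

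The remainder is asymptotically negligible: as $h$ is continuous on $[0,d]$ one has $|R_t|\leqslant 2\sup_{0\leqslant x\leqslant d}|h(x)|<\infty$, so $R_t/\sqrt t\to 0$ almost surely, and by Slutsky's theorem it suffices to prove that $M_t/\sqrt t\Rightarrow\mathcal{N}(0,\eta_U^2)$.

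For this I would invoke the central limit theorem for continuous local martingales: if $\langle M\rangle_t/t$ converges in probability to a positive constant as $t\to\infty$, then $M_t/\sqrt t$ is asymptotically centred Gaussian with that limiting variance. Intuitively this follows from the Dambis--Dubins--Schwarz representation $M_t=\beta_{\langle M\rangle_t}$ for a Brownian motion $\beta$, which gives $M_t/\sqrt t=\big(\beta_{\langle M\rangle_t}/\sqrt{\langle M\rangle_t}\big)\,\sqrt{\langle M\rangle_t/t}$, the first factor being exactly standard normal in law and the second converging to a deterministic limit; no Lindeberg-type condition enters because $M$ has continuous paths. (Alternatively one quotes the martingale functional CLT, in the spirit of Zhang and Glynn~\cite{MR2771195}.) To evaluate the limiting variance, observe that $Z$ is a non-degenerate diffusion on the compact interval $[0,d]$ reflected at both endpoints, hence ergodic, with stationary law $\pi(\DD x)=W(x)\,\DD x/\int_0^d W(v)\,\DD v$, where $W$ is the function of Lemma~\ref{lemma7}. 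The ergodic theorem then gives
\begin{equation*}
\frac{\langle M\rangle_t}{t}=\frac1t\int_0^t\sigma^2 h'(Z_s)^2\,\DD s\;\longrightarrow\;\sigma^2\int_0^d h'(x)^2\,\pi(\DD x)=\eta_U^2>0\qquad\text{almost surely,}
\end{equation*}
the ergodic limit~(\ref{ergo}) that defines $\eta_U^2$. Combining this with the martingale CLT yields $M_t/\sqrt t\Rightarrow\mathcal{N}(0,\eta_U^2)$, and the proposition follows.

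The probabilistic substance is confined to the last paragraph: establishing the ergodicity of $\DROU$ together with its explicit stationary distribution, and invoking the martingale CLT with the \emph{random} clock $\langle M\rangle_t$ in place of a deterministic one. I do not expect either to pose a real obstacle --- the first because the state space $[0,d]$ is compact and the diffusion coefficient is non-degenerate, the second through the time-change argument above --- everything else being the algebraic bookkeeping that the choice of $h$ in Lemma~\ref{lemma7} is designed to make work. An entirely parallel argument, with the roles of the boundaries $0$ and $d$ interchanged in Lemma~\ref{lemma7}, produces the corresponding CLT for the idleness process $L_t$.
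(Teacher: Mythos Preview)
Your proposal is correct and follows essentially the same route as the paper: insert the $h$ of Lemma~\ref{lemma7} into the It\^{o} identity~(\ref{HH}) to write $U_t-q_Ut$ as a square-integrable martingale $M_t=\sigma\int_0^t h'(Z_s)\,\DD B_s$ plus the bounded remainder $h(Z_t)-h(Z_0)$, use ergodicity of $Z$ on $[0,d]$ to get $\langle M\rangle_t/t\to\eta_U^2$, and conclude via the martingale CLT. The only cosmetic differences are that the paper quotes the martingale CLT from \cite{MR2952852} rather than arguing via Dambis--Dubins--Schwarz, and it pulls the explicit stationary density from \cite{MR1993278} instead of writing it directly as $W(x)/\int_0^d W$.
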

\begin{proof} We insert the unique solution $h(x)$ of Lemma~\ref{lemma7} into (\ref{HH}). Since $h'(0)=0, h'(d)=1$, and $(\mathcal{L}h)(Z_{t})=q_U$, we have the following integral expression:
\begin{equation*}
U_{t}-q_Ut+h(Z_{t})-h(Z_{0})=\sigma \int_{0}^{t}h'(Z_{s})\mathrm{d}B_{s}.
\end{equation*}
We then observe that $M_{t}:=U_{t}-q_Ut+h(Z_{t})-h(Z_{0})$ is a zero-mean square integrable martingale. As usual, $\langle M\rangle$ denotes the quadratic variation process of $M$. By the ergodic theorem \cite[p.\ 134]{MR0346904},
\begin{equation} \label{ergo}
t^{-1}\langle M\rangle_{t}=t^{-1}\sigma^{2} \int_{0}^{t}h'(Z_{s})^{2}\DD s \stackrel{\mathbb{P}}{\rightarrow} \sigma^{2} \int_{0}^{d}h'(x)^{2}\pi(\DD x)=:\eta_U^{2},
\end{equation}
where $\pi$ is the stationary distribution corresponding $Z_{t}$. The density of $\pi$ is obtained in \cite[Prop.\ 1]{MR1993278} as, with $N(m,s^2)$ denoting a Normal random variable with mean $m$ and variance $s^2$,
\begin{eqnarray*}
\pi(x)
&=& \frac{\DD}{\DD x}{\mathbb P}\left(\left.N\left(\frac{\alpha}{\gamma}, \frac{\sigma^{2}}{2\gamma}\right)\leq x\,\right|\,0 \leqslant N\left(\frac{\alpha}{\gamma}, \frac{\sigma^{2}}{2\gamma}\right) \leqslant d\right)\\&=&\sqrt{\frac{2\gamma}{\sigma^{2}}}\frac{\varphi\left((x-\frac{\alpha}{\gamma})\sqrt{\frac{2\gamma}{\sigma^{2}}}\right)}{\Phi\left((d-\frac{\alpha}{\gamma})\sqrt{\frac{2\gamma}{\sigma^{2}}}\right)-\Phi\left((-\frac{\alpha}{\gamma})\sqrt{\frac{2\gamma}{\sigma^{2}}}\right)},\end{eqnarray*}
where $\varphi$ and $\Phi$ are the density function and cumulative density function of a standard Normal random variable.
Then,
\[
\eta^{2}_U= \frac{4q_U^{2}}{\sigma^{2}}\int_{0}^{d}
\left(\int_0^x\frac{W(v)}{W(x)}\DD v\right)^2 \pi(x)\,\DD x.\]
Then by the martingale central limit theorem \cite[Thm.\ 2.1]{MR2952852},
$
t^{-\frac{1}{2}}M_{t}\Rightarrow \mathcal{N}(0,\eta_U^{2})$ as  $t\rightarrow \infty.$
Since $Z \in [0, d]$ and $h$ is continuous, $h(Z)$ is bounded. So, \[
\frac{h(Z_{t})-h(Z_{0})}{\sqrt{t}}\rightarrow 0\] a.s.\ as $t\rightarrow \infty$, which implies the claim.
\end{proof}

\vb

The loss process at $0$ can be treated analogously. Define
\[q_L:=\frac{\sigma^2}{2}\frac{1}{\int_0^d W(v)\DD v},\:\:\:\:
\eta_L^2:=\sigma^2\int_0^d \left(-\frac{1}{W(x)}+\frac{2q_L}{\sigma^2}\int_0^x\frac{W(v)}{W(x)}\DD x\right)^2\pi(x)\,{\rm d}x.\]

\begin{proposition} The loss process $L$ satisfies the central limit theorem
\begin{equation*}
\frac{L_{t}-q_Lt}{\sqrt{t}}\Rightarrow \mathcal{N}(0,\eta_L^{2}), \:\text{as  } t\rightarrow \infty.
\end{equation*}
\end{proposition}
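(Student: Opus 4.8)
The plan is to mirror the proof of the preceding proposition for $U_t$ almost verbatim, replacing the mixed boundary conditions of Lemma~\ref{lemma7} by the ones appropriate to the lower boundary. First I would solve the {\sc ode} $(\mathcal{L}h)(x)=q$ on $[0,d]$ subject to $h(0)=0$, $h'(0)=1$, and $h'(d)=0$. Using exactly the reduction-of-order computation in the proof of Lemma~\ref{lemma7}, the general solution is
\[
h'(x)=\frac{C_1}{W(x)}+\frac{2q}{\sigma^2}\int_0^x\frac{W(u)}{W(x)}\DD u,
\qquad
h(x)=C_2+C_1\int_0^x\frac{\DD u}{W(u)}+\frac{2q}{\sigma^2}\int_0^x\int_0^u\frac{W(v)}{W(u)}\DD v\,\DD u.
\]
Imposing $h(0)=0$ gives $C_2=0$; imposing $h'(0)=1$ gives $C_1=1$ (since $W(0)=1$); and imposing $h'(d)=0$ forces $C_1/W(d)+(2q/\sigma^2)\int_0^d W(u)/W(d)\,\DD u=0$, i.e.\ $q=-\sigma^2/(2\int_0^d W(v)\,\DD v)$. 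This is $-q_L$ rather than $q_L$; the sign is exactly what is needed because in (\ref{HH}) the $L$-term enters with coefficient $+h'(0)$ while the $U$-term entered with $-h'(d)$, so I will use $h$ solving $(\mathcal{L}h)(x)=-q_L$ (equivalently, take the $h$ above and note $(\mathcal Lh)=-q_L$). One checks that this $h$ is precisely $h(x)=\int_0^x\big(-1/W(u)+(2q_L/\sigma^2)\int_0^u W(v)/W(u)\,\DD v\big)\DD u$ up to a sign bookkeeping, matching the integrand appearing in the definition of $\eta_L^2$.

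Next I would substitute this $h$ into It\^o's formula (\ref{HH}). With $h'(0)=1$, $h'(d)=0$, and $(\mathcal{L}h)(Z_t)\equiv -q_L$ (a constant because $Z_t\in[0,d]$), the drift and $U$-terms cancel and we obtain
\[
L_t-q_L t + h(Z_t)-h(Z_0)=\sigma\int_0^t h'(Z_s)\,\DD B_s,
\]
so that $M_t:=L_t-q_L t+h(Z_t)-h(Z_0)$ is a zero-mean, square-integrable martingale with $\langle M\rangle_t=\sigma^2\int_0^t h'(Z_s)^2\,\DD s$. By the ergodic theorem for $Z$ (as cited for the $U$ case), $t^{-1}\langle M\rangle_t\to\sigma^2\int_0^d h'(x)^2\,\pi(\DD x)=\eta_L^2$ in probability, where $h'(x)=-1/W(x)+(2q_L/\sigma^2)\int_0^x W(v)/W(x)\,\DD v$, which is exactly the integrand in the stated formula for $\eta_L^2$ (note the harmless typo ``$\DD x$'' for ``$\DD v$'' in the inner integral there). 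The martingale {\sc clt} then yields $t^{-1/2}M_t\Rightarrow\mathcal N(0,\eta_L^2)$, and since $Z\in[0,d]$ and $h$ is continuous, $h(Z_t)$ is bounded, whence $(h(Z_t)-h(Z_0))/\sqrt t\to 0$ a.s., giving the claim by Slutsky.

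The only genuinely new point, and the one place where a little care is needed, is checking the sign of $q$ and of $h'$ so that the definitions of $q_L$ and $\eta_L^2$ in the text come out correctly: the lower-boundary term in (\ref{HH}) has the opposite sign from the upper-boundary term, so one must be careful whether to solve $(\mathcal Lh)=q_L$ or $(\mathcal Lh)=-q_L$ and correspondingly whether $L_t-q_Lt$ or $L_t+q_Lt$ appears. Everything else — existence/uniqueness of the {\sc ode} solution, the martingale property, the ergodic limit, boundedness of $h(Z)$ — is word-for-word identical to the $U_t$ proof and needs no new ideas. I would therefore present the argument compactly, pointing to the proof of the $U_t$ proposition and only spelling out the modified boundary conditions and the resulting $h$, $q_L$, $\eta_L^2$.
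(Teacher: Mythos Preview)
Your approach is exactly the paper's: the proposition is stated without proof, the paper merely remarking that ``the loss process at $0$ can be treated analogously,'' and your outline is precisely that analogous treatment. The one sign slip --- with $h'(0)=1$ and $(\mathcal Lh)=-q_L$, It\^o's formula (\ref{HH}) gives $-(L_t-q_Lt)+h(Z_t)-h(Z_0)$ on the left, not $+(L_t-q_Lt)+\cdots$ --- is harmless since the limiting normal is symmetric (equivalently, take $h'(0)=-1$, $(\mathcal Lh)=q_L$ to get exactly your displayed identity and the paper's integrand for $\eta_L^2$), and you already flagged the need for care with signs.
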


\section{Discussion on large deviations of loss process}\label{SEC6}
Realizing that $q_U$ is the mean rate at which $U$ increases, it is seen that for $c> q_U$, the event $\{U_{t}> ct\}$ for $t$ large is rare. We analyze the decay rate of its large deviations probability  $\lim_{t \rightarrow \infty} t^{-1}\log \mathbb{P}_{z}(U_{t}>ct)$ by appealing to the G\"{a}rtner-Ellis theorem. Here and below we use the notation $\mathbb{P}_{z}(\cdot)$ for the probabilities of events in terms of $Z$ if $Z$ starts in $z$, and $\mathbb{E}_{z}(\cdot)$ the corresponding expectation.

To apply this result,
our first task is to prove that $t^{-1}\log \mathbb{E}_{z}\exp(\theta U_t)$ converges, as $t\to\infty$, to a function $\psi(\theta)$ (pointwise in $\theta \in {\mathbb R}$; this function is often referred to as the limiting cumulant function).  We apply It\^{o}'s formula to obtain a martingale which contains $\exp(\theta U_{t})$. Then we set $h'(0)=0$, $h'(d)=\theta$. Similarly to what we did above, we obtain
\begin{equation*}
\theta U_{t}-\int_{0}^{t}(\mathcal{L}h)(Z_{s})\mathrm{d}s+h(Z_{t})-h(Z_{0})=\sigma \int_{0}^{t}h'(Z_{s})\mathrm{d}B_{s}.
\end{equation*}
The quadratic variation process of $\sigma \int_{0}^{\cdot}h'(Z_{s})\mathrm{d}B_{s}$ is $\sigma^{2}\int_{0}^{\cdot}h'(Z_{s})^{2}\mathrm{d}s$. Since $Z \in [0, d]$ and $h$ is twice continuously differentiable, Novikov's condition is satisfied. We have the following exponential martingale:
\begin{equation*}
M_t=\exp\left(\sigma \int_{0}^{t}h'(Z_{s})\mathrm{d}B_{s}-\frac{\sigma^{2}}{2}\int_{0}^{t}h'(Z_{s})^{2}\mathrm{d}s\right).
\end{equation*}
Also,
\begin{equation*}
M_t=\exp\left(\theta U_{t}-\int_{0}^{t}[(\mathcal{L}h)(Z_{s})+\frac{\sigma^{2}}{2}h'(Z_{s})^{2}]\mathrm{d}s+h(Z_{t})-h(Z_{0})\right).
\end{equation*}
Since $h(Z)$ is bounded and $\mathbb{E}_{z}M_t=1$ for all $t\geqslant 0$,
\begin{equation}\label{UNI}
\frac{1}{t} \log \mathbb{E}_{z}\exp\left(\theta U_{t}-\int_{0}^{t}\left((\mathcal{L}h)(Z_{s})+\frac{\sigma^{2}}{2}h'(Z_{s})^{2}\right)\mathrm{d}s\right)\rightarrow 0, \:\:\text{as  } t\rightarrow \infty.
\end{equation}
Suppose the integrand in (\ref{UNI}) is merely a function of $\theta$, i.e.
$(\mathcal{L}h)(Z_{s})+({\sigma^{2}}/{2})\,h'(Z_{s})^{2}=\psi(\theta)$, then we will have the desired limit, i.e.,
\begin{equation*}
\lim_{t \rightarrow \infty}\frac{1}{t}\log \mathbb{E}_{z}\exp(\theta U_{t})=\psi(\theta).
\end{equation*}
We propose to prove this as follows. Firstly, by assuming the existence of $\psi(\theta)$, we are to solve the second-order nonlinear {\sc ode}
\begin{equation}\label{ODE}
(\mathcal{L}h)(x)+\frac{\sigma^{2}}{2}h'(x)^{2}=\psi(\theta),\:\:\:\:0\leqslant x \leqslant d,
\end{equation}
such that $h(0)=0,$ $h'(0)=0.$ Then $ h'(d)$ is a function of $\psi(\theta)$, so we can write it as $h'(d, \psi(\theta)).$ Secondly, we try to prove that there exists a unique root $\psi(\theta)$ of equation $h'(d, \psi(\theta))=\theta$ for any $\theta \in \mathbb{R}.$ It is readily seen that (\ref{ODE}) with the given initial conditions has a unique solution. Letting $h(x)$ be this solution, we can apply change of variables $y(x)=\exp(h(x))$, so as  to obtain
\begin{equation*}
\sigma^{2}y''(x)+(-2\gamma x+2\alpha)y'(x)-2\psi(\theta)y(x)=0, \:\:\:\: y(0)=1,\:\:\:\: y'(0)=0.
\end{equation*}
We can solve it explicitly, but we cannot verify the existence of a function $\psi(\theta)$ such that $h'(d, \psi(\theta))=\theta$. This problem is caused by fact that  the solution is rather involved (expressed in terms of  Kummer's series \cite{MR2001201}). 

In the context of reflected Brownian motion
Zhang and Glynn \cite{MR2771195} managed to solve the problem of identifying the limiting cumulant function $\psi(\theta)$  using the approach
followed above. This is an example which shows that it is not always possible to apply methods that work in the case of constant drift in cases with
state-dependent drift (such as $\DROU$); cf.\  \cite{MR1993278}.

{\small
\bibliographystyle{plain}
%\bibliography{ou}

}

\end{document}